\documentclass[english]{extarticle}
\usepackage[T1]{fontenc}
\usepackage[latin9]{inputenc}
\usepackage{geometry}
\geometry{verbose,tmargin=2cm,bmargin=2cm,lmargin=3cm,rmargin=3cm}
\usepackage{color}
\usepackage{babel}
\usepackage{float}
\usepackage{units}
\usepackage{mathrsfs}
\usepackage{amsmath}
\usepackage{amsthm}
\usepackage{amssymb}
\usepackage{graphicx}
\usepackage[numbers]{natbib}
\usepackage[unicode=true,pdfusetitle,
 bookmarks=true,bookmarksnumbered=false,bookmarksopen=false,
 breaklinks=false,pdfborder={0 0 1},backref=false,colorlinks=true]
 {hyperref}
\hypersetup{
 linkcolor=blue, urlcolor=marineblue, citecolor=blue, pdfstartview={FitH}, hyperfootnotes=false, unicode=true}

\makeatletter


\numberwithin{equation}{section}
\numberwithin{figure}{section}
\theoremstyle{plain}
\newtheorem{thm}{\protect\theoremname}
\theoremstyle{definition}
\newtheorem{defn}[thm]{\protect\definitionname}
\theoremstyle{plain}
\newtheorem{lem}[thm]{\protect\lemmaname}
\theoremstyle{remark}
\newtheorem{rem}[thm]{\protect\remarkname}
\theoremstyle{definition}
\newtheorem{example}[thm]{\protect\examplename}
\theoremstyle{plain}
\newtheorem{prop}[thm]{\protect\propositionname}

\usepackage{lettrine}
\let\myFoot\footnote
\renewcommand{\footnote}[1]{\myFoot{#1\vspace{0.8mm}}}
\usepackage{lmodern}
\usepackage[T1]{fontenc}
\usepackage{concmath}
\usepackage{ae,aecompl}
\usepackage[T1]{fontenc}
\usepackage{amsmath}
\usepackage{graphicx}

\usepackage{tikz}

\makeatother

\providecommand{\definitionname}{Definition}
\providecommand{\examplename}{Example}
\providecommand{\lemmaname}{Lemma}
\providecommand{\propositionname}{Proposition}
\providecommand{\remarkname}{Remark}
\providecommand{\theoremname}{Theorem}

\def \Om{\Omega}
\def \RR{{\mathbb R}}

\def \CC{{\mathbb C}}

\begin{document}
\title{On the Dirichlet eigenvalue problem and the conformal Skorokhod
embedding problem}

\author{Maher Boudabra, Greg Markowsky\\
Monash University}

\maketitle

\abstract{In a recent work by Gross, the following problem was stated and solved: given a measure $\mu$ with finite second moment, find a simply connected domain $U$ in $\CC$ such that the real part of a Brownian motion stopped when it leaves $U$ is distributed as $\mu$. The construction developed by Gross yields a domain which is symmetric with respect to the real axis, but it has been noted by other authors that other domains are also possible, in particular there are a number of examples which have the property that a vertical ray starting at a point in the domain lies entirely within the domain. In this paper we give a new solution to the problem posed by Gross, and show that these other cases noted before are special cases of this method. We further show that the domain generated by this method has the property that it always has the minimal rate (as defined in terms of the spectrum of the Laplacian operator) among all possible domains corresponding to a fixed distribution $\mu$, which gives a partial solution to a question posed by Mariano and Panzo. We show that the domain is unique, provided certain conditions are imposed, and use this to give several examples. We also describe a method for identifying the boundary curve of the domain, and discuss several other related topics.}

\section{Introduction}

In what follows, $Z_{t}$ is a standard planar Brownian motion starting at 0, and for any plane domain $U$ containing 0 we let $\tau_{U}$ denote the first exit time of $Z_t$ from $U$. In the recent paper \cite{boudabra2019remarks} the following theorem was proved, which is a direct generalization of the elegant results and methods developed in \citep{gross2019}.

\begin{thm} \label{newguy}
Given a nondegenerate probability distribution $\mu$ on $\RR$ with zero mean and finite nonzero $p$-th moment (with $1<p<\infty$),
we can find a simply connected domain $U$ such that $\Re(Z_{\tau_{U}})$ has
the distribution $\mu$. Furthermore we have $E[(\tau_{U})^{p/2}]<\infty$.
\end{thm}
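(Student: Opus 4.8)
The plan is to use the conformal invariance of planar Brownian motion to transfer the whole problem to the unit disk, and then to split it into a one-dimensional boundary-matching problem and a separate moment estimate. First I would set up the reduction. If $\phi:\DD\to U$ is a conformal map with $\phi(0)=0$, then $\phi$ carries a time-changed Brownian motion in $\DD$ onto $Z$ run until $\tau_{U}$; in particular $Z_{\tau_U}$ has the law of $\phi(e^{i\Theta})$ with $\Theta$ uniform on $[0,2\pi)$, since harmonic measure from the centre of the disk is uniform and is conformally invariant. Thus $\Re(Z_{\tau_U})\stackrel{d}{=}\Re\phi(e^{i\Theta})$, and the existence half of the theorem becomes: construct a univalent $\phi$ on $\DD$ with $\phi(0)=0$ whose boundary real part $\Re\phi(e^{i\Theta})$ has law $\mu$. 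The normalisation $\phi(0)=0$ automatically forces $E[\Re\phi(e^{i\Theta})]=\Re\phi(0)=0$, consistent with the mean-zero hypothesis.

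Next I would construct the domain. Rather than prescribing arbitrary boundary values (which need not come from a univalent map), I would build a domain of the special symmetric form $U=\{x+iy:\ |y|<h(x)\}$, following and generalising the construction of Gross, where the half-height $h$ is determined from the distribution function of $\mu$ so that the vertical-exit mechanism realises exactly the law $\mu$ for the horizontal coordinate. Since $\Re Z_t$ is itself a standard one-dimensional Brownian motion and $\tau_U$ is the first time $|\Im Z_t|$ reaches $h(\Re Z_t)$, matching the law of $\Re(Z_{\tau_U})$ to $\mu$ is a Skorokhod-type condition, which I would verify by an optional-stopping / harmonic-measure computation against the bounded harmonic functions on $U$. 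As $U$ is then a genuine simply connected domain, its Riemann map $\phi$ is univalent, and the vertical-ray domains noted in the literature appear as the special cases where $h\equiv\infty$ on part of the support.

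Finally, for the moment bound I would argue through Hardy spaces. Applying the Burkholder--Davis--Gundy inequality to the conformal (vector) martingale $Z_{t\wedge\tau_U}$, whose bracket is $2(t\wedge\tau_U)$, gives $E[\tau_U^{p/2}]\asymp E[(\sup_{t\le\tau_U}|Z_t|)^p]$, and the maximal-function theory of $H^p$ for the map $\phi$ yields $E[(\sup_{t\le\tau_U}|Z_t|)^p]\asymp E[|Z_{\tau_U}|^p]=\tfrac{1}{2\pi}\int_0^{2\pi}|\phi(e^{i\theta})|^p\,d\theta$. Hence $E[\tau_U^{p/2}]<\infty$ is equivalent to $\phi\in H^p$, and since
\[
E[|Z_{\tau_U}|^p]\ \le\ C_p\Big(\int_\RR |x|^p\,d\mu(x)+E\big[|\Im Z_{\tau_U}|^p\big]\Big),
\]
the first term is finite by hypothesis and the task reduces to controlling the vertical extent of the exit point.

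I expect this last point to be the main obstacle. The matching in the second step only pins down the horizontal coordinate, so one must show that the $h$ coming from $\mu$ does not let $\partial U$ escape too rapidly in the imaginary direction, i.e.\ that $\Im Z_{\tau_U}$ inherits a finite $p$-th moment from the finiteness of the $p$-th moment of $\mu$. This is precisely the place where the assumption $1<p<\infty$ (rather than merely finite variance) is needed, and where a careful quantitative relation between $h$ and the tails of $\mu$ must be established.
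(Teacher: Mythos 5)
Your reduction to the disk is the right frame, and you have correctly located the two genuine difficulties, but the proposal resolves neither of them. First, the construction step is circular: you posit a symmetric domain $U=\{x+iy:|y|<h(x)\}$ with ``the half-height $h$ determined from the distribution function of $\mu$ so that the vertical-exit mechanism realises exactly the law $\mu$,'' but producing such an $h$ \emph{is} the theorem. There is no direct formula for $h$ in terms of $\mu$, and the optional-stopping/harmonic-measure verification you describe presupposes that $h$ is already in hand. The construction actually used (following Gross) goes in the opposite direction: one prescribes the \emph{boundary values of the Riemann map} rather than the domain, setting $\Re\phi(e^{i\theta})=G_\mu(\theta/2\pi)$ where $G_\mu$ is the quantile function (so that the uniform law of the exit angle pushes forward to $\mu$), takes $\Im\phi$ on the boundary to be the periodic Hilbert transform of this, and defines $\phi$ by the resulting power series. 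The domain is then $U=\phi(\DD)$, and the nontrivial point is the univalence of $\phi$, which is proved from the monotonicity of $\theta\mapsto G_\mu(\theta/2\pi)$ via a smoothing/argument-principle argument and Hurwitz's theorem (this is also exactly where nondegeneracy of $\mu$ is needed, to rule out $\phi$ being constant). Your proposal contains no substitute for this univalence argument.

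Second, you correctly reduce $E[\tau_U^{p/2}]<\infty$ to $\phi\in\mathcal{H}^p$ (this is Burkholder's theorem, and your BDG/maximal-function sketch is a reasonable account of it), and you correctly reduce that to controlling $E[|\Im Z_{\tau_U}|^p]$ --- but you then explicitly leave this ``main obstacle'' open. In the construction above it is not an obstacle at all: the boundary imaginary part is by definition $H\{\varphi_\mu\}$ with $\varphi_\mu(\theta)=G_\mu(\theta/2\pi)\in L^p$, and M.~Riesz's strong type estimate $\|H\{f\}\|_p\le\lambda_p\|f\|_p$ for $1<p<\infty$ gives the required bound; this is precisely why the hypothesis is $1<p<\infty$ (the estimate fails at $p=1$). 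So the place where you say ``a careful quantitative relation between $h$ and the tails of $\mu$ must be established'' is exactly where the Hilbert transform does the work, and without introducing it (or an equivalent conjugate-function estimate) the proof does not close. As written, the proposal is a correct road map with the two load-bearing steps missing.
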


Note that by {\it nondegenerate} we mean that $\mu$ is not a point mass at the origin. In fact, this qualification is not present in either \citep{gross2019} or \cite{boudabra2019remarks}, and it seems to have been overlooked. But it is necessary, and we will discuss this more later in the paper.

In what follows, when $\mu$ is given we will refer to $U$ as a $\mu${\it-domain}. Therefore Theorem \ref{newguy} provides the existence of a $\mu$-domain whenever $\mu$ satisfies the moment conditions. In general a $\mu$-domain is not unique, however a uniqueness principle for this construction with additional conditions was proved in \cite{boudabra2019remarks}. We will say that a domain $U$ is {\it symmetric} if $\bar z \in U$ whenever $z \in U$. We will call a domain $U$ {\it $\Delta$-convex} if, whenever $z_1, z_2 \in U$ with $\Re(z_1) = \Re(z_2)$ then the vertical line segment connecting $z_1$ and $z_2$ lies entirely within $U$. With these definitions, the uniqueness principle is as follows.

\begin{thm} \label{thm:The-distribution-}
For any nondegenerate distribution $\mu$ satisfying the conditions of Theorem \ref{newguy}, there is a unique domain $U$ such that $\Re(Z_{\tau_\Om}) \sim \mu$ and which is symmetric, $\Delta$-convex, and satisfies $E[(\tau_{\Om})^{p/2}] < \infty$.
\end{thm}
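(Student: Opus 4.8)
The plan is to prove uniqueness by first establishing that the requirement of symmetry plus $\Delta$-convexity pins down a great deal of the geometry of $U$, and then to show that two such domains giving the same distribution $\mu$ must coincide. I would begin by encoding the domain through its vertical cross-sections. Because $U$ is $\Delta$-convex, for each real abscissa $x$ the set $\{y : x+iy \in U\}$ is an interval (an open vertical segment or empty), and because $U$ is symmetric this interval is centered at the real axis, so it has the form $(-h(x), h(x))$ for some function $h(x) \geq 0$. Thus a symmetric $\Delta$-convex domain is completely determined by a single height function $h$ on the projection of $U$ onto the real axis. The whole problem therefore reduces to showing that the distribution $\mu$ of $\Re(Z_{\tau_U})$ determines the function $h$ uniquely, subject to the moment/integrability constraint $E[\tau_U^{p/2}]<\infty$.

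Next I would bring in the probabilistic structure that links $h$ to $\mu$. The key observation is that $\Re(Z_t)$ and $\Im(Z_t)$ are independent one-dimensional Brownian motions. For a symmetric $\Delta$-convex domain, one can run the imaginary part until it exits the symmetric strip $(-h(x),h(x))$ governing the current real position; the real part at the exit time is what carries the distribution $\mu$. The natural strategy is to relate $h$ to the \emph{density} (or distribution function) of $\mu$ via a differential or integral relation, exploiting that the rate at which probability mass accumulates at a given abscissa $x$ is controlled by $h(x)$ through the exit behaviour of the vertical Brownian motion from the segment of half-height $h(x)$. I expect this to reproduce the construction implicit in Theorem \ref{newguy} and \cite{boudabra2019remarks}, and to furnish an explicit formula expressing $h$ in terms of $\mu$; the uniqueness of $h$ then follows from the uniqueness of the solution to that relation. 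The integrability condition $E[\tau_U^{p/2}]<\infty$ enters to rule out spurious solutions (for instance, domains that are ``too large'' near the edges of the support of $\mu$), ensuring the boundary height is the correct, minimal one.

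The main obstacle I anticipate is the rigorous passage from the distributional identity $\Re(Z_{\tau_U}) \sim \mu$ to pointwise determination of $h$, particularly at the boundary of the support of $\mu$ and at points where $\mu$ may have atoms or where $h$ may vanish or blow up. Two competing symmetric $\Delta$-convex domains $U_1, U_2$ with height functions $h_1, h_2$ could in principle yield the same $\mu$, so I would argue by contradiction: assuming $h_1 \neq h_2$ on a set of positive measure, I would consider the domain $U_1 \cap U_2$ (which is again symmetric and $\Delta$-convex, with height $\min(h_1,h_2)$) and compare the exit distributions, using a coupling of the two Brownian motions together with the strong Markov property to show that the distributions must differ — thereby contradicting the hypothesis. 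Making this comparison quantitative, and correctly handling the role of the integrability constraint in excluding unbounded discrepancies, is where the real work lies.

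Finally, I would address the degeneracy caveats flagged in the excerpt: the nondegeneracy of $\mu$ guarantees $h$ is not identically zero, and the stipulation that $U$ be a genuine domain (open and connected) together with $\Delta$-convexity forces the support of $h$ to be an interval, which rules out disconnected or pathological competitors and closes the argument.
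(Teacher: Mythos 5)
Your reduction of a symmetric $\Delta$-convex domain to a single height function $h$ is a correct description of the geometry, but the heart of the proof --- showing that $\mu$ determines $h$ --- is missing, and neither of the two strategies you sketch for it works as described. First, there is no local relation between $h(x)$ and the density of $\mu$ at $x$: the harmonic measure (from $0$) of the boundary near $x+ih(x)$ depends on the entire geometry of $U$, not on the exit behaviour of the vertical coordinate from the segment $(-h(x),h(x))$. The real and imaginary parts of $Z_t$ are independent as processes, but $\tau_U$ couples them, so ``running the imaginary part until it exits the strip governing the current real position'' does not compute the exit law. Second, the contradiction argument via $U_1\cap U_2$ only compares the exit law of $U_1\cap U_2$ with those of $U_1$ and $U_2$: from $E[|Z_\tau|^2]=2E[\tau]$ one does get $E[\tau_{U_1\cap U_2}]<E[\tau_{U_1}]$ when the inclusion is strict, but this says nothing about whether $U_1$ and $U_2$ have different exit laws --- indeed they necessarily share the same $E[\tau]=\int x^2\,d\mu$. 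The whole difficulty of the theorem is that, absent the geometric hypotheses, distinct domains genuinely can share the law of $\Re(Z_\tau)$ (the strip and the parabola both work for the hyperbolic secant), so any comparison argument must exploit symmetry and $\Delta$-convexity in an essential quantitative way, and yours does not.

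The argument the paper relies on (it proves the closely analogous Theorem \ref{uniqueness} in Section \ref{uniquesec}; the present statement is quoted from \cite{boudabra2019remarks}, where the proof is of the same type) is complex-analytic rather than probabilistic. Write $U=f(\mathbb{D})$ with $f$ conformal and $f(0)=0$; the geometric hypotheses force $\theta\mapsto\Re(f(e^{i\theta}))$ to be a.e.\ monotone on the relevant arc, so the law of $\Re(Z_{\tau_U})$, which is the pushforward of the uniform law on the circle under this boundary trace, determines its generalized inverse and hence (by Lemma \ref{equality}) the trace itself a.e.; the Schwarz integral formula (Lemma \ref{schwarzif}) then recovers $f$ from $\Re(f(e^{i\theta}))$ up to an additive imaginary constant, which the normalization $f(0)=0$ removes. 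The moment condition $E[\tau_U^{p/2}]<\infty$ enters via Burkholder's theorem to place $f$ in $\mathcal{H}^p$, which is what legitimizes the boundary values and the Schwarz formula. This is the idea your proposal lacks: the geometric condition is used to turn the boundary trace of $\Re f$ into a quantile function of $\mu$, after which uniqueness is a statement about analytic functions rather than about coupling Brownian motions.
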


It was pointed out in \cite{boudabra2019remarks} that this result fails if any of the conditions is omitted, and in particular it was shown that the parabola and horizontal strip are $\mu$-domains when $\mu$ is the hyperbolic secant distribution. Another example of this phenomenon was demonstrated in \cite{mariano2020conformal}, where it was shown that the catenary (see Figure \ref{cat11} below) is a $\mu$ domain when $\mu$ is the uniform distribution on $(-1,1)$, even though it can not be the domain generated by Gross' construction as it is not symmetric. Furthermore, the authors of \cite{mariano2020conformal} showed that, among all $\mu$-domains for the uniform distribution, the catenary is the one with the minimal principle Dirichlet eigenvalue, and asked for a characterization of $\mu$-domains which are extremal with respect to the principle Dirichlet eigenvalue.

In this paper, our primary intention is to demonstrate a new method for solving the conformal Skorohod problem, one which gives the parabola and catenary when applied to the hyperbolic secant and uniform distributions, respectively. This method also has the property that its solution is always the one with the minimal principle Dirichlet eigenvalue among all $\mu$-domains, which therefore gives a partial solution to the problem posed by Mariano and Panzo in \cite{mariano2020conformal}.

To state our results we need a definition. We will say that a domain is $\Delta^\infty${\it -convex} if, given any $z \in U$, the vertical ray $\{w:\Re(w) = \Re(z), Im(w) \geq Im(z)\}$ lies entirely in $U$. So, for example, the parabola and catenary described above are $\Delta^\infty$-convex, while a horizontal strip is not. The reason for this name is that it is a variation on the notion of $\Delta$-convexity, as defined above. Our primary results are as follows.

\begin{thm}
\label{Existence1} If $\mu\in L^{p}$ is nondegenerate for some $p>1$ then there exists
a $\mu$-domain $U$ containing zero which is $\Delta^\infty$-convex. Furthermore $\mathbf{E}(\tau_{U}^{p/2})<\infty$.
\end{thm}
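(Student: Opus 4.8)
The plan is to realize $U$ as the image of the upper half-plane $\mathbb{H}=\{s+ir:r>0\}$ under an explicit conformal map built from the quantile function of $\mu$, so that $\Delta^\infty$-convexity becomes automatic from a monotonicity property of the boundary data. Write $F$ for the distribution function of $\mu$ and let $C(s)=\tfrac1\pi\arctan s+\tfrac12$ be the distribution function of the standard Cauchy law, which is the harmonic-measure (exit) distribution on $\RR$ of planar Brownian motion run from $i$ to $\partial\mathbb{H}$. Set $g=F^{-1}\circ C$, the nondecreasing quantile transform, and note that if $\xi$ is standard Cauchy then $g(\xi)\sim\mu$, since $P(g(\xi)\le x)=P(\xi\le C^{-1}(F(x)))=F(x)$. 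I would then seek an analytic $\phi:\mathbb{H}\to\CC$ with $\phi(i)=0$ whose boundary real part equals $g$; granting that $\phi$ extends suitably to $\overline{\mathbb{H}}$ and is univalent, conformal invariance of Brownian motion gives $Z_{\tau_U}=\phi(\xi)$, hence $\Re(Z_{\tau_U})=g(\xi)\sim\mu$, so $U=\phi(\mathbb{H})$ is a $\mu$-domain containing $\phi(i)=0$.

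To produce $\phi$, let $u$ be the Poisson extension of $g$ to $\mathbb{H}$ and $v$ its harmonic conjugate normalized by $v(i)=0$, so $\phi=u+iv$ and $u(i)=\int g\,dC=\int x\,d\mu=0$ (which forces $\mu$ to have mean zero, as required for any such embedding). The crucial point is that $g$ is nondecreasing, so $u_s=\partial_s u$ is the Poisson extension of the nonnegative measure $dg$ and is therefore strictly positive throughout $\mathbb{H}$ by the strong maximum principle (unless $\mu$ is degenerate, which is excluded). Consequently $\phi'=u_s-iu_r$ never vanishes, each level set $\{u=c\}$ is a single curve climbing from $\partial\mathbb{H}$ to $\infty$, and $v$ is strictly monotone along it; this exhibits $(u,v)$ as a global coordinate system, yielding univalence of $\phi$ and identifying $U$ as the epigraph $\{x+iy:y>f(x)\}$ of a function $f$ over the support-interval of $\mu$, which is exactly $\Delta^\infty$-convexity. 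I expect the main obstacle to lie here: upgrading the infinitesimal monotonicity into clean global univalence and the correct boundary correspondence on $\overline{\mathbb{H}}$, especially when $\mu$ has atoms or gaps (so $g$ has jumps or flat stretches) or unbounded support. I would manage this by first treating $\mu$ with a smooth, strictly positive density of bounded support, and then passing to a limit, checking that the approximating domains converge to a $\Delta^\infty$-convex $\mu$-domain.

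It remains to verify $\mathbf{E}(\tau_U^{p/2})<\infty$. Since $\Re Z$ is a standard one-dimensional Brownian motion, $\langle \Re Z\rangle_{\tau_U}=\tau_U$, so the Burkholder--Davis--Gundy inequality reduces the claim to the bound $\mathbf{E}\big[(\sup_{t\le\tau_U}|\Re Z_t|)^p\big]<\infty$ on the running maximum of the real part. I expect this to be the principal analytic difficulty, because the bound genuinely fails for some laws (the half-plane, which corresponds to the Cauchy distribution $\mu\notin L^p$, is the instructive counterexample), so any argument must combine the hypothesis $\mu\in L^p$ with the narrowing geometry of the constructed domain. Two routes seem promising: a direct estimate through the time-change identity $\tau_U=\int_0^{\tau_{\mathbb{H}}}|\phi'(B_s)|^2\,ds$, bounding moments of $|\phi'|$ along a Brownian path $B$ in $\mathbb{H}$ started at $i$; or a comparison with the $\mu$-domain $U_0$ supplied by Theorem \ref{newguy}, for which $\mathbf{E}(\tau_{U_0}^{p/2})<\infty$ is already known, by comparing the two embeddings of $\mu$ into $\Re Z$. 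Carrying out either estimate rigorously, together with the almost-sure finiteness $\tau_U<\infty$, is where I would expect the remaining effort to concentrate.
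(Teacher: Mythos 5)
Your construction is the paper's, written in half-plane coordinates: the M\"obius map from $\DD$ to $\mathbb{H}$ sends the uniform law on the circle to the Cauchy law on $\RR$, so your boundary datum $g=F^{-1}\circ C$ is exactly the paper's $\varphi_\mu(\theta)=G_\mu(\theta/2\pi)$ reparametrized, and your ``Poisson extension plus harmonic conjugate'' is the paper's conjugate Fourier series $\widetilde\varphi_\mu$. The difference is that you leave the two genuinely hard steps as announced difficulties rather than proofs. The more serious gap is the moment bound. Your BDG reduction to $\mathbf{E}[(\sup_{t\le\tau_U}|\Re Z_t|)^p]<\infty$ restates the problem (this equivalence is part of Burkholder's theorem) but does not close it, and neither of your two proposed routes is carried out. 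The ingredient you are missing is the strong-type $L^p$ bound for the conjugate function: since $\varphi_\mu\in L^p$ and the periodic Hilbert transform is bounded on $L^p$ for $1<p<\infty$, the boundary function $\varphi_\mu+iH\{\varphi_\mu\}$ lies in $L^p$, hence $\widetilde\varphi_\mu\in\mathcal{H}^p$; Burkholder's Theorem~4.1 then converts finiteness of the Hardy norm directly into $\mathbf{E}(\tau_U^{p/2})<\infty$. Without this (or an equivalent maximal-function estimate showing the nontangential maximal function of $u$ is controlled by $\|g\|_{L^p(dC)}=\||x|\|_{L^p(\mu)}$), the final claim of the theorem is unproved.

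The univalence step also has a real hole, though a more repairable one. Your infinitesimal argument is sound in spirit (positivity of $u_s=\Re\phi'$ on the convex domain $\mathbb{H}$ gives injectivity by the Noshiro--Warschawski integral trick), but the identity ``$u_s$ is the Poisson extension of the nonnegative measure $dg$'' requires an integration by parts whose boundary terms must be controlled when $\mu$ has unbounded support ($dg$ can have infinite total mass and $g$ can blow up at $\pm\infty$), and the boundary correspondence $\Re\phi^*=g$ a.e.\ --- which is what you actually use to conclude $\Re(Z_{\tau_U})\sim\mu$ --- again needs $\phi$ to lie in a Hardy class so that nontangential limits exist and agree with the prescribed datum. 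You defer all of this to ``approximate by smooth $\mu$ and pass to the limit,'' but the limit step is precisely where the paper does its work: one needs Hurwitz's theorem to preserve injectivity, and one must rule out the constant limit, which is exactly where the nondegeneracy hypothesis enters. As written, your proposal identifies the right construction and the right obstacles but proves neither the injectivity for general $\mu$ nor the moment bound.
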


\begin{thm}
\label{uniqueness1}The domain $U$ given in Theorem \ref{Existence1} is the unique $\mu$-domain which is $\Delta^\infty$-convex and satisfies $\mathbf{E}(\tau_{U}^{p/2})<\infty$ for some $p>1$.
\end{thm}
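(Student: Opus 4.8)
The plan is to prove uniqueness of the $\Delta^\infty$-convex $\mu$-domain by reducing the problem to a uniqueness statement about the boundary curve, and then exploiting the probabilistic meaning of $\Delta^\infty$-convexity together with the moment constraint. First I would observe that a $\Delta^\infty$-convex domain $U$ containing $0$ is, by its very definition, determined by the lower boundary: for each abscissa $x$ in the projection of $U$ onto the real axis, the vertical fiber $U \cap \{\Re = x\}$ is an upward ray $(\phi(x), +\infty)$ for some boundary function $\phi$. Thus $U = \{z : \Im(z) > \phi(\Re(z))\}$, and uniqueness of $U$ is equivalent to uniqueness of the function $\phi$ (together with its domain of definition, an interval). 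So the entire problem becomes: show that two boundary functions $\phi_1, \phi_2$ giving the same distribution $\mu$ for $\Re(Z_{\tau_U})$, each with finite $p/2$ moment of the exit time, must coincide.

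The key analytic step is to read off the distribution $\mu$ from $\phi$. Because $U$ is $\Delta^\infty$-convex and the exit time has a finite moment, $Z$ almost surely exits $U$ through the lower graph $\{(\,x,\phi(x)\,)\}$ rather than escaping to $+\infty$ in the imaginary direction; hence $\Re(Z_{\tau_U})$ is exactly the abscissa at which the path meets the graph of $\phi$. I would then invoke the conformal invariance that underlies this whole circle of ideas: let $F$ be a conformal map from $U$ onto a canonical domain (the upper half plane or a horizontal strip) normalized at $0$, under which Brownian motion is conformally preserved up to a time change, so that $\Re(Z_{\tau_U})$ is the image of the harmonic measure of $\partial U$ from $0$ pushed through $\Re$. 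The point is that the pair (distribution $\mu$, finiteness of $\mathbf{E}(\tau_U^{p/2})$) pins down the harmonic measure on the boundary, and through it the geometry of $\phi$. Concretely, I expect a monotonicity/comparison argument: if $\phi_1 \le \phi_2$ with strict inequality on a set of positive measure, then the domain $U_1 \supseteq U_2$ strictly, and a strict-domain-monotonicity statement for the resulting real-part distribution (the larger domain spreads $\Re(Z_\tau)$ out more) forces $\mu_1 \ne \mu_2$, a contradiction. To handle the general case where $\phi_1$ and $\phi_2$ cross, I would argue on the lower and upper envelopes $\min(\phi_1,\phi_2)$ and $\max(\phi_1,\phi_2)$, or run the comparison fiberwise.

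More explicitly, I anticipate the cleanest route is to encode $\phi$ via the conformal map and use the characterization already implicit in Theorem \ref{Existence1}: the $\Delta^\infty$-convex $\mu$-domain arises from a specific transform of $\mu$ (an analogue of the symmetric construction, but shifted so that the domain opens upward), so that $\phi$ is given by an explicit formula in terms of the cumulative distribution and the associated potential of $\mu$. If both $\phi_1$ and $\phi_2$ produce $\mu$, I would show each must satisfy the same integral equation, and then uniqueness follows from injectivity of that transform. The finiteness of $\mathbf{E}(\tau_U^{p/2})$ enters to rule out degenerate or "escaping" solutions that would otherwise give the same marginal $\mu$ but fail to be genuine $\mu$-domains (for instance domains where the walk exits through the top), thereby singling out the correct branch of the construction.

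The main obstacle I foresee is making the domain-monotonicity comparison rigorous when the two candidate boundaries cross, since one cannot then nest the domains. The delicate point is that $\Re(Z_{\tau_U})$ is a nonlocal functional of the whole domain, so altering $\phi$ on one fiber changes the exit distribution everywhere; a purely local fiber-by-fiber comparison does not immediately work. I expect the finite-moment condition is exactly what is needed to control the behavior near the ends of the interval of definition (where $\phi \to +\infty$ or the domain narrows) and to justify that the boundary is recurrent for the horizontal component, so that the harmonic-measure/coupling argument closes. Overcoming this — likely by a coupling of two Brownian motions or by an extremal/variational characterization tying $\phi$ uniquely to $\mu$ — is where the real work lies.
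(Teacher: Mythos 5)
Your reduction of a $\Delta^\infty$-convex domain to its lower boundary function $\phi$ and your instinct to use conformal invariance are reasonable, but the argument has a genuine gap at exactly the point you flag: the comparison of two crossing boundaries. The convex-order/martingale argument you gesture at (a larger domain ``spreads out'' $\Re(Z_\tau)$, so equality of laws forces equality of domains) can be made to work when $U_1\supseteq U_2$, but when $\phi_1$ and $\phi_2$ cross neither domain contains the other, and passing to $\min(\phi_1,\phi_2)$ or $\max(\phi_1,\phi_2)$ produces auxiliary domains whose exit laws are not $\mu$ and are not controlled by $\mu$, so there is nothing to compare against; no mechanism is supplied to close this case, and it is the whole content of the theorem. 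The alternative route via ``injectivity of an integral transform'' is named but the transform and its injectivity are never exhibited, so as written the proposal does not constitute a proof.

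The paper closes the gap by working on the disk rather than on the graph of $\phi$. Take conformal maps $f,g:\mathbb{D}\to U,V$ fixing $0$. The key observation is that $\Delta^\infty$-convexity forces $t\mapsto\Re(f(e^{it}))$ to be a.e.\ non-decreasing on $(0,2\pi)$; since $\Re(Z_{\tau_U})$ is the pushforward of the uniform law on the circle under this monotone function, the law $\mu$ determines its generalized inverse ($2\pi$ times the c.d.f.\ of $\mu$), and hence determines the function itself up to a.e.\ equivalence --- this is Lemma \ref{equality}, which says two a.e.\ non-decreasing functions with the same generalized inverse agree a.e. Therefore $\Re(f(e^{it}))=\Re(g(e^{it}))$ a.e., and the Schwarz integral formula --- applicable precisely because $\mathbf{E}(\tau_U^{p/2})<\infty$ puts $f,g$ in $\mathcal{H}^p$ by Burkholder's theorem, so radial boundary values exist and reconstruct the map --- gives that $f-g$ is constant, hence $f=g$ since both fix $0$. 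Note that the role of the moment hypothesis is thus analytic (Hardy-space membership, so that boundary data determine the map), not merely to forbid ``escape through the top'' as you suggest; the counterexample in the paper's remark, $\CC$ minus a downward half-strip, is a $\Delta^\infty$-convex domain with bounded exit distribution that is nonetheless excluded only by the failure of the moment condition.
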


\begin{thm}
\label{minimality of the domain1}The $\mu$-domain $U$ constructed
in Theorem \ref{Existence1} is always the one with the minimal principle Dirichlet eigenvalue among all $\mu$-domains.
\end{thm}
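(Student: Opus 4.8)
The plan is to translate the eigenvalue statement into a comparison of exit–time tails, and then to show that $\Delta^\infty$-convexity is exactly what makes $U$ extremal. Write $\lambda(\Om)$ for the principal Dirichlet eigenvalue of $-\Delta$ on a domain $\Om$. Since the generator of $Z_t$ is $\tfrac12\Delta$, the standard link between the bottom of the Dirichlet spectrum and the lifetime of the killed process gives $\lambda(\Om)=-2\lim_{t\to\infty}\tfrac1t\log\mathbf P_0(\tau_\Om>t)$, equivalently $\tfrac12\lambda(\Om)=\sup\{\alpha:\mathbf E_0[e^{\alpha\tau_\Om}]<\infty\}$. Thus the assertion $\lambda(U)\le\lambda(V)$ for every $\mu$-domain $V$ is precisely that $\tau_U$ has the heaviest exponential tail among all $\mu$-domains. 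I would first dispose of the case where $\mu$ has unbounded support: then the $\Delta^\infty$-convex $U$ contains horizontal strips of arbitrarily large width (placed high enough to clear its lower boundary), so $\lambda(U)=0$ by domain monotonicity, and minimality is automatic because $\lambda\ge 0$ always. Hence I may assume $[a,b]:=\mathrm{conv}(\mathrm{supp}\,\mu)$ is a bounded interval, and since $\mu$ is nondegenerate with mean $0$ we have $a<0<b$.

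The core of the argument is a single confinement lemma. Let $X_t=\Re(Z_t)$, a one–dimensional Brownian motion, and let $\sigma=\inf\{t:X_t\notin(a,b)\}$ be its exit time from $(a,b)$, so that $\mathbf P_0(\sigma>t)$ decays at the rate $\tfrac12\lambda(S)$ where $S:=(a,b)\times\RR$ and $\lambda(S)=\pi^2/(b-a)^2$. I claim that for \emph{any} $\mu$-domain $\Om$ one has $\tau_\Om\le\sigma$ almost surely. Indeed, $X$ is a continuous martingale and $X_{\tau_\Om}\sim\mu$ is supported in $[a,b]$; if $X$ reached the level $b$ at some time $\varrho<\tau_\Om$, then optional stopping on $[\varrho,\tau_\Om]$ would give $\mathbf E[X_{\tau_\Om}\mid\mathcal F_\varrho]=b$, which together with $X_{\tau_\Om}\le b$ forces $X_{\tau_\Om}=b$ a.s.\ on $\{\varrho<\tau_\Om\}$. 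When $\mu$ has no atom at $b$ this event is null, and symmetrically at $a$, so $X_t\in(a,b)$ for all $t<\tau_\Om$, i.e.\ $\tau_\Om\le\sigma$. As a byproduct the reachable part of $\Om$ lies in $S$.

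Applying this in two ways proves the theorem. For the general $\mu$-domain $V$, the bound $\tau_V\le\sigma$ gives $\mathbf P_0(\tau_V>t)\le\mathbf P_0(\sigma>t)$, and comparing exponential decay rates yields $\lambda(V)\ge\lambda(S)$. For $U$ itself, the confinement gives $U\subseteq S$, while $\Delta^\infty$-convexity supplies the matching upper bound: writing $U=\{\Im w>\beta(\Re w)\}$ with $\beta$ finite on $(a,b)$, for each $\varepsilon>0$ the upward–unbounded sub-strip $[a+\varepsilon,b-\varepsilon]\times(y_\varepsilon,\infty)$, with $y_\varepsilon=\sup_{[a+\varepsilon,b-\varepsilon]}\beta$, lies in $U$, so $\lambda(U)\le\pi^2/(b-a-2\varepsilon)^2$; letting $\varepsilon\to0$ gives $\lambda(U)\le\lambda(S)$. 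Combining, $\lambda(U)\le\lambda(S)\le\lambda(V)$, which is the claimed minimality (and in fact $\lambda(U)=\lambda(S)=\pi^2/(b-a)^2$). It is worth stressing that the upward–ray condition is used in an essential way precisely here: it is what allows a full-height strip to be inscribed so that $\lambda(U)$ is forced down to the one–dimensional value.

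The main obstacle is making the confinement lemma fully rigorous and handling the boundary cases. The optional-stopping identity must be justified by a localization — stopping also at the exit time of $X$ from a slightly larger interval and passing to the limit — since $X_{\cdot\wedge\tau_\Om}$ need not be uniformly integrable when $\Om$ is vertically unbounded. More delicately, if $\mu$ charges an endpoint, say $\mu(\{b\})>0$, then paths are permitted to touch $b$ and are thereafter committed to exit at real part $b$; one must check that these paths do not lighten the tail of $\tau_V$ below the rate $\tfrac12\lambda(S)$, which I would do by restarting the analysis at the first hitting time of $b$ and showing such excursions still exit no later than the one–dimensional clock allows. A similar adjustment (replacing $[a,b]$ by the convex hull and modifying the inscribed-strip construction) is needed when $\mathrm{supp}\,\mu$ is disconnected. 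I expect these to be technical rather than conceptual points, the decisive idea being the confinement $\tau_V\le\sigma$ forced by the martingale property of $\Re(Z_t)$ together with the support constraint.
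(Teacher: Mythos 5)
Your strategy is in substance identical to the paper's: the paper simply defers to Theorems 2 and 3 of Mariano--Panzo, and their argument is exactly your two-step scheme --- confine every $\mu$-domain to the vertical strip $S$ over the convex hull $[a,b]$ of $\mathrm{supp}\,\mu$, so that $\lambda(V)\geq\lambda(S)$ by domain monotonicity of the rate, then use the upward rays supplied by $\Delta^\infty$-convexity to inscribe tall thin rectangles in $U$ and force $\lambda(U)\leq\lambda(S)=\pi^2/(b-a)^2$ (in your normalization). Recasting the first step as a pathwise comparison of exit times rather than an inclusion of domains is only a change of language.

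There is, however, a genuine gap at the crux. Your confinement lemma --- $\tau_{\Om}\leq\sigma$ almost surely for \emph{every} $\mu$-domain $\Om$ --- is false as stated, and no localization will rescue it. The Remark in Section \ref{uniquesec} of the paper exhibits the counterexample: $V=\CC\setminus\{\Im z\leq-1,\ -1\leq\Re z\leq1\}$ is a simply connected (indeed $\Delta^\infty$-convex) $\mu$-domain for a compactly supported, nondegenerate, mean-zero $\mu$, yet $V$ contains points of arbitrarily large real part --- in fact whole half-planes, so its principal eigenvalue is $0$, strictly below that of the constructed domain. The optional stopping identity $\mathbf{E}[X_{\tau_{\Om}}\mid\mathcal{F}_{\varrho}]=X_{\varrho}$ genuinely fails there because $X_{t\wedge\tau_V}$ is not uniformly integrable ($\mathbf{E}[\tau_V^{1/4}]=\infty$); this is not a ``technical rather than conceptual'' point but precisely where the statement breaks. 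The theorem is true, and your proof goes through, only if ``$\mu$-domain'' carries a moment hypothesis such as $\mathbf{E}[\tau_V]<\infty$ (as in Mariano--Panzo's definition), which makes the real-part martingale uniformly integrable and validates the confinement; that hypothesis must be assumed, not argued away. Two smaller repairs: the endpoint-atom discussion is unnecessary --- apply optional stopping at the hitting time of $b+\varepsilon$ rather than of $b$, which yields a contradiction with $X_{\tau_{\Om}}\leq b$ regardless of atoms; and in the unbounded-support case a full horizontal strip need not fit inside $U$ if the lower boundary function is unbounded above, but large squares $[c,c+L]\times[M,M+L]$ do fit (the boundary function is upper semicontinuous, hence bounded above on compact subintervals) and give $\lambda(U)=0$ just as well.
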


Sections \ref{prelimsec}, \ref{constructsec}, and \ref{uniquesec} are devoted to the proofs of these theorems. The domain generated by our method is bounded below by a boundary curve, and in Section \ref{boundarysec} we describe a method of determining this curve from the measure $\mu$. Finally, in Section \ref{cauchy}, we present a curiosity, that a formal application of our methods to the Cauchy distribution yields the correct $\Delta^\infty$-convex domain, even though the Cauchy does not satisfy the conditions of our theorems.

\section{Preliminaries} \label{prelimsec}

For a planar domain $U$, the function $(t,z)\mapsto\mathbf{P}_{z}(t<\tau_{U})$
satisfies the heat equation
\begin{equation}
\begin{cases}
\frac{1}{2}\Delta u-\frac{\partial u}{\partial t}=0\\
u(t,z)=0 & z\in\partial U
\end{cases}.\label{heat equation}
\end{equation}
The equation (\ref{heat equation}) is commonly referred to as being of \emph{Dirichlet
boundary condition} type\footnote{There are other categories, such as the Cauchy or Neumann types.}.
The {\it rate} of the solution of (\ref{heat equation}), which we denote
by $\lambda(U)$, is defined to be half of the principal Dirichlet
eigenvalue of $U$. This is the minimum of the spectrum of the
half of the Laplacian operator on $U$ combined with the boundary condition. Using
the expansion of the solution on the Hilbert basis generated by the
associated eigenfunctions, we can check that
\begin{equation}
-\frac{\ln\mathbf{P}_{z}(t<\tau_{U})}{t}\underset{t\rightarrow+\infty}{\longrightarrow}\lambda.\label{ln(P(t<T))}
\end{equation}

In \citep{mariano2020conformal}, the two authors treated the rate
of (\ref{heat equation}) on domains coming from the conformal
Skorokhod embedding. More precisely, they proposed the problem of finding, for a given distribution $\mu$, the $\mu$-domains that attain the highest
and lowest possible rate. That is, we seek two $\mu$- domains $U_{\mu}$
and $V_{\mu}$ such that
\[
\lambda(U_{\mu})\leq\lambda(\mathscr{D})\leq\lambda(V_{\mu})
\]
for all $\mu$-domains $\mathscr{D}$. As mentioned earlier, they partially solved this problem when $\mu$ is the uniform distribution on $(-1,1)$, showing that the catenary has the minimal rate among all $\mu$-domains. When we prove Theorem \ref{minimality of the domain1} we will see that this is a special case of a more general construction, one which always produces the minimal rate solution.

The analytic tools we will need, such as Fourier series, the periodic Hilbert transform, and the Hardy norm, are largely the same as used in \cite{gross2019} and \citep{boudabra2019remarks}.
For the sake of completeness,
we recall here two definitions as well as some related results.

A major tool for us is the periodic Hilbert transform.

\begin{defn}
The Hilbert transform of a $2\pi$-periodic function $f$ is defined
by
\[
H\{f\}(x):=PV\left\{ \frac{1}{2\pi}\int_{-\pi}^{\pi}f(x-t)\cot({\textstyle \frac{t}{2}})dt\right\} =\lim_{\eta\rightarrow0}\frac{1}{2\pi}\int_{\eta\leq|t|\leq\pi}f(x-t)\cot({\textstyle \frac{t}{2}})dt
\]
where $PV$ denotes the \emph{Cauchy principal value}.
\end{defn}

The Hilbert transform has some properties of great importance.
The Hilbert transform is an automorphism of $L^{p}$ and it satisfies
the \emph{strong type estimate}
\begin{equation}
||H_{f}||_{p}\leq\lambda_{p}||f||_{p}\label{strong inequality}
\end{equation}
for some positive constant $\lambda_{p}$ \citep[Vol I, page 203]{king2009hilbert}.  Furthermore, under suitable conditions it serves to swap the real
and the imaginary parts of the boundary values of holomorphic functions. That is, if $f$ is an analytic function on the disk which extends suitably to the boundary, then the Hilbert transform of $\Re(f)$ is $\Im(f)$ and the Hilbert transform of $\Im(f)$ is $\Re(f)$. Another important property is that the Hilbert operator $H$ commutes with positive dilations. That is, if $\Phi_{\lambda}\{f\}(x)=f(\lambda x)$ then
\begin{equation}
(H\circ\Phi_{\lambda})\{f\}=(\Phi_{\lambda}\circ H)\{f\}.\label{dilation}
\end{equation}



\begin{defn}
For any holomorphic function on the unit disc we define its $p^{th}$-
Hardy norm by
\begin{equation}
H_{p}(f):=\sup_{0\leq r<1}\left\{ \frac{1}{2\pi}\int_{0}^{2\pi}|f(re^{\theta i})|^{p}d\theta\right\} ^{\frac{1}{p}}.\label{hardy}
\end{equation}
The set of holomorphic functions whose $p^{th}$- Hardy norm is finite
is denoted by $\mathcal{H}^{p}$ and called the Hardy space (of index
$p$).
\end{defn}

The Hardy norm of a function $f$ is merely the
upper bound of $\{N_{r}(f)\}_{0<r<1}$ where
\[
N_{r}(f):=\left\{ \frac{1}{2\pi}\int_{0}^{2\pi}|f(re^{\theta i})|^{p}d\theta\right\} ^{\frac{1}{p}}\,\,\forall0<r<1.
\]
The quantity $N_{r}(f)$ is simply the $L^{p}$ norm of the restriction
$\theta\mapsto f(re^{\theta i})$. It can be shown, using harmonic
analysis techniques, that $N_{r}(f)$ is non-decreasing in terms of
$r$ \citep{Rudin2001}. This explains the use of $\sup$ in (\ref{hardy}).
Another crucial result about Hardy norms is that, if $H_{p}(f)$
is finite then $f$ has a radial extension to the boundary. More precisely
$f^{\ast}(z):=\lim_{r\rightarrow1}f(rz)$ exists for almost every $z\in\partial\mathbb{D}$ (with respect to Lebesgue measure), and this extension belongs to $L^{p}$ as well. In \citep{burkholder1977exit},
the author provided a powerful theorem that guarantees the equivalence
between the finiteness of the $p^{th}$ Hardy norm of $f$ and the
finiteness of $\mathbf{E}(\tau_{f(\mathbb{D})}^{\nicefrac{p}{2}})$.

Another important tool for us the following standard result.

\begin{lem}[Schwarz\textquoteright{} integral formula] \label{schwarzif}
 If $f\in\mathscr{H}^{p}$ then for all $z\in\mathbb{D}$
\begin{equation}
f(z)=\frac{1}{2\pi}\int_{0}^{2\pi}\frac{e^{ti}+z}{e^{ti}-z}\Re[f(e^{ti})]dt+i\Im[f(0)].\label{schwarz-1}
\end{equation}
\end{lem}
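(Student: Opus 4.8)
The plan is to identify the right-hand side as a holomorphic function whose real part agrees with $\Re f$, and then to pin down the additive imaginary constant by evaluating at the origin. Write
\[
g(z):=\frac{1}{2\pi}\int_{0}^{2\pi}\frac{e^{ti}+z}{e^{ti}-z}\,\Re[f(e^{ti})]\,dt.
\]
First I would check that $g$ is holomorphic on $\mathbb{D}$: for $z$ in a compact subset of the disc the Schwarz kernel $\frac{e^{ti}+z}{e^{ti}-z}$ is bounded and holomorphic in $z$ (the denominator never vanishes since $|z|<1=|e^{ti}|$), and $\Re f^{\ast}\in L^{1}$, so one may differentiate under the integral sign.

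The key computation is that the real part of the Schwarz kernel is exactly the Poisson kernel,
\[
\Re\!\left[\frac{e^{ti}+z}{e^{ti}-z}\right]=\frac{1-r^{2}}{1-2r\cos(\theta-t)+r^{2}}=P_{r}(\theta-t),\qquad z=re^{\theta i},
\]
which follows by multiplying numerator and denominator by $\overline{e^{ti}-z}$ and reading off the real part of the numerator as $1-|z|^{2}$. Taking real parts in the definition of $g$ then gives $\Re g(z)=\frac{1}{2\pi}\int_{0}^{2\pi}P_{r}(\theta-t)\,\Re[f(e^{ti})]\,dt$, the Poisson integral of the boundary values of the harmonic function $\Re f$. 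Since the Poisson integral reproduces $\Re f$ in the interior, we obtain $\Re g=\Re f$ on $\mathbb{D}$. Two holomorphic functions with equal real parts differ by a purely imaginary constant, so $f=g+ic$ for some $c\in\mathbb{R}$. Finally, at $z=0$ the kernel equals $1$, whence $g(0)=\frac{1}{2\pi}\int_{0}^{2\pi}\Re[f(e^{ti})]\,dt$ is real; comparing imaginary parts forces $c=\Im[f(0)]$, which is precisely the asserted formula.

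The one point requiring care --- and the main obstacle --- is justifying that the Poisson integral of the boundary trace recovers $\Re f$, since an $\mathscr{H}^{p}$ function need not extend continuously to $\partial\mathbb{D}$. I would handle this by a dilation argument. For $\rho<1$ the dilate $f_{\rho}(z):=f(\rho z)$ is holomorphic in a neighbourhood of $\overline{\mathbb{D}}$, so $\Re f_{\rho}$ is continuous on $\overline{\mathbb{D}}$ and the classical Poisson formula applies verbatim, giving the identity with $f$ and $f^{\ast}$ replaced by $f_{\rho}$ and $f(\rho e^{ti})$, and with $f_{\rho}(0)=f(0)$. Letting $\rho\to1$, the left side tends to $f(z)$ for each fixed $z\in\mathbb{D}$ by continuity of $f$, while on the right the Hardy-space mean-convergence theorem yields $f(\rho e^{ti})\to f^{\ast}(e^{ti})$ in $L^{p}$ (hence in $L^{1}$), and the kernel remains bounded for fixed $z$; this passes the limit inside the integral and delivers the formula for $f$ itself. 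This is the only step where the hypothesis $f\in\mathscr{H}^{p}$ is genuinely used, and the $L^{1}$ convergence of the dilated boundary values is the substantive analytic input.
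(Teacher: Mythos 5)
Your proof is correct. For comparison: the paper does not actually prove this lemma --- it is stated as a standard result with pointers to Rudin (Th.~17.26), Remmert, and Gakhov --- so there is no in-paper argument to measure yours against. Your route (real part of the Schwarz kernel equals the Poisson kernel, Poisson representation of $\Re f$, then the imaginary constant fixed at the origin) is the classical one, and your dilation argument correctly isolates the only nontrivial analytic input, namely that $f(\rho e^{ti})\to f^{\ast}(e^{ti})$ in $L^{1}$ as $\rho\to 1$; that mean-convergence theorem is exactly what the cited references supply. One small caveat worth flagging: the argument (and indeed the formula itself) requires $p\geq 1$, since for $0<p<1$ an $\mathscr{H}^{p}$ function need not be the Poisson integral of its boundary values and $L^{p}$ convergence on the circle no longer implies $L^{1}$ convergence; this is harmless here because the paper only ever invokes the lemma with $p>1$, but the lemma as stated does not say so.
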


The formula (\ref{schwarz-1}) says that, under some assumptions,
that the boundary behavior of $\Re[f]$ determines entirely the map
$f$ inside $\mathbb{D}$. In particular, it implies Poisson integral
formula as $\Re[f(z)]$ is harmonic. Schwarz\textquoteright{} integral
formula is also used in the field of boundary value problem for analytic
functions. We refer the reader to \citep[Th. 17.26]{Rudin2001}, \cite[Ch. 7]{remmert2012theory}, or
\citep[Ch. I]{gakhov2014boundary} for more details.

\section{Proof of Theorem \ref{thm:The-distribution-}} \label{constructsec}

The proof builds on the methods used in \cite{gross2019}, but with some additional ideas required. Let $G$ be the quantile function for $\mu$, which is the pseudo inverse of the c.d.f of $\mu$
and consider the $2\pi$-periodic function
\[
\varphi_{\mu}:\begin{alignedat}{1}(0,2\pi) & \longrightarrow\mathbb{R}\\
\theta & \longmapsto G_{\mu}({\textstyle \frac{\theta}{2\pi}}).
\end{alignedat}
\]
It is a well known fact that $G_{\mu}(\mathsf{Uni}(0,1))$ has the distribution
$\mu$ \citep{devroye2006nonuniform}. Therefore, by scaling, if $\theta$
is uniformly distributed on $(0,2\pi)$ then
\begin{equation}
\varphi_{\mu}(\theta)\sim\mu.\label{mu}
\end{equation}
 As $\varphi_{\mu}\in L^{p}$ then it has a Fourier series whose partial
sum converge to it in $L^{p}$, i.e
\begin{equation}
\varphi_{\mu}(\theta)\overset{L^{p}}{=}\sum_{n=1}^{+\infty}(a_{n}\cos(n\theta)+b_{n}\sin(n\theta))\label{fourier}
\end{equation}
where $a_{n}$ and $b_{n}$ are the standard Fourier coefficients
\footnote{In general there is a constant $\frac{a_{0}}{2}$ added to the sum,
but it is omitted as it equals the average of $\mu$ which is assumed
zero.}. In fact, (\ref{fourier}) is also true in the almost everywhere
statement, which is the subject of the \emph{Carleson-Hunt theorem} \citep{Reyna2004,fefferman1973pointwise}.
The Hilbert transform of $\varphi_{\mu}$ is
\[
H\{\varphi_{\mu}\}(\theta)=\sum_{n=1}^{+\infty}(a_{n}\sin(n\theta)-b_{n}\cos(n\theta))
\]
and it belongs to $L^{p}$ as well \citep{butzer1971hilbert}. The
power series
\begin{equation} \label{define analytic}
\widetilde{\varphi}_{\mu}(z)=\sum_{n=1}^{+\infty}(a_{n}-b_{n}i)z^{n}
\end{equation}

belongs to $H^{p}$ since $\Re[\widetilde{\varphi}_{\mu}(e^{\theta i})]\overset{a.e}{=}\varphi_{\mu}(\theta)$
and $\Im[\widetilde{\varphi}_{\mu}(e^{\theta i})]\overset{a.e}{=}H\{\varphi_{\mu}\}(\theta)$.
The map $\widetilde{\varphi}_{\mu}(z)$ is one to one on the unit
disc $\mathbb{D}$ and maps $0$ to $0$. We give the proof of this
fact in a separate lemma. The domain $U:=\widetilde{\varphi}_{\mu}(\mathbb{D})$
is $\Delta^\infty$-convex since
$\varphi_{\mu}$ is non decreasing a.e on $[0,2\pi]$. Let $Z_{t}$
be a planar Brownian motion starting at $0$ and stopped at $\tau_{\mathbb{D}}$.
Then by conformal invariance $\widetilde{\varphi}_{\mu}(Z_{\tau_{\mathbb{D}}})$
is a planar Brownian motion starting at $\widetilde{\varphi}_{\mu}(0)=0$
and evaluated at $\tau_{U}$. As $Z_{\tau_{\mathbb{D}}}=e^{\theta i}$
where $\theta:=\mathrm{Arg}(Z_{\tau_{\mathbb{D}}})\sim\mathrm{\mathsf{Uni}}(0,2\pi)$,
then $\Re[\widetilde{\varphi}_{\mu}(Z_{\tau_{\mathbb{D}}})]=\varphi_{\mu}(\theta)$
has the distribution $\mu$ by (\ref{mu}). Since $0\in U$ and $\theta\mapsto\varphi_{\mu}(\theta)$
is non-decreasing it follows that it is $\Delta^\infty$-convex. Finally, the finiteness of $\mathbf{E}(\tau_{U}^{p})$
comes from theorem $4.1$ in \citep{burkholder1977exit}. \qed

\begin{lem}
The map $\widetilde{\varphi}_{\mu}(z)$ is one to one on the unit
disc.
\end{lem}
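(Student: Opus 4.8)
The plan is to exploit the single fact that the boundary real part $\Re[\widetilde{\varphi}_{\mu}(e^{i\theta})]=\varphi_{\mu}(\theta)$ is non-decreasing on $(0,2\pi)$, and to deduce injectivity by a level-set (nodal-domain) argument. Suppose, for contradiction, that $\widetilde{\varphi}_{\mu}(z_{1})=\widetilde{\varphi}_{\mu}(z_{2})=w_{0}$ with $z_{1}\neq z_{2}$ in $\mathbb{D}$, and write $c=\Re w_{0}$, $d=\Im w_{0}$, $u=\Re\widetilde{\varphi}_{\mu}$, $v=\Im\widetilde{\varphi}_{\mu}$ (both harmonic). For a value $c$ strictly between the infimum and supremum of $\varphi_{\mu}$, monotonicity gives that on $\partial\mathbb{D}$ the function $u-c$ is $\leq 0$ on the arc $\{e^{i\theta}:0<\theta<\theta_{0}\}$ and $\geq 0$ on the complementary arc, where $\theta_{0}$ is the point at which $\varphi_{\mu}$ crosses $c$; thus, traversing the circle once, $u-c$ changes sign exactly twice, at $\theta_{0}$ and at the jump $\theta=0\equiv 2\pi$.

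First I would establish the key structural lemma: a function harmonic on $\mathbb{D}$ whose boundary values are $\leq 0$ on one arc and $\geq 0$ on the complementary arc has exactly two nodal domains, so that the nodal set $\gamma=\{z\in\mathbb{D}:u(z)=c\}$ is a single simple crosscut joining $e^{i\theta_{0}}$ to $1$ and separating $\{u>c\}$ from $\{u<c\}$. This follows from the maximum principle, since no component of $\{u>c\}$ or $\{u<c\}$ can be compactly contained in $\mathbb{D}$ and each must abut the corresponding single boundary arc, together with the observation that a branch point of $\gamma$ could occur only at a zero of $\widetilde{\varphi}_{\mu}'$ and would split $\mathbb{D}$ into more than two nodal regions, a contradiction; in particular $\widetilde{\varphi}_{\mu}'$ has no zeros in $\mathbb{D}$ and $\gamma$ is a smooth arc.

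With this in hand the conclusion is quick. Both $z_{1}$ and $z_{2}$ satisfy $u=c$, hence both lie on the crosscut $\gamma$. Along $\gamma$ the imaginary part $v$ is strictly monotone: by the Cauchy--Riemann equations $\nabla v\perp\nabla u$, and since $\gamma$ is a level curve of $u$ its tangent is also perpendicular to $\nabla u$, so the arclength derivative of $v$ along $\gamma$ equals $\pm|\widetilde{\varphi}_{\mu}'|\neq 0$ with a fixed sign. Thus $v$ is injective on $\gamma$; but $v(z_{1})=v(z_{2})=d$, forcing $z_{1}=z_{2}$, the desired contradiction. The main obstacle is rigor at the boundary: $\varphi_{\mu}$ is only an $L^{p}$ function with a genuine jump at $\theta=0$ (and $U$ is typically unbounded), so $u$ is the Poisson integral of discontinuous, possibly unbounded data rather than a continuous function on $\overline{\mathbb{D}}$. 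I would therefore prove the structural lemma using interior level sets and the maximum principle on compact subsets, interpreting the boundary sign pattern via the nontangential limits guaranteed by membership in $\mathcal{H}^{p}$, and dispatching the case where $\varphi_{\mu}$ is locally constant (so $\{\varphi_{\mu}=c\}$ is a nondegenerate arc) separately as an easy special case. An equivalent finish is the argument principle: monotonicity forces the boundary curve $\theta\mapsto\widetilde{\varphi}_{\mu}(e^{i\theta})$ to meet each vertical line at most once, so its winding number about any $w_{0}$ is at most one; there too the only delicate point is justifying the boundary winding computation by passing to the circles $|z|=r\uparrow 1$.
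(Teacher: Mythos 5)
Your approach is genuinely different from the paper's: you argue directly on the target function via a nodal-domain analysis of $u=\Re\widetilde{\varphi}_{\mu}$, whereas the paper smooths the quantile function ($G_n\to G$ in $L^1$, with an explicit $-i\frac{\kappa_n}{\pi}Log(1-z)$ term absorbing the jump at $\theta=0\equiv 2\pi$), proves injectivity of each approximant by the boundary winding-number criterion (\cite[Thm.~10.31]{Rudin2001}), and passes to the limit with the Schwarz integral formula and Hurwitz's theorem. The paper's detour through smoothing exists precisely to avoid the step you have left unproved, and that step is where your proposal has a genuine gap.

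Concretely, the gap is the ``key structural lemma'': that $\{u>c\}$ and $\{u<c\}$ are each connected, so that the level set $\{u=c\}$ is a single crosscut containing both $z_1$ and $z_2$. Your maximum-principle sketch shows no nodal component is compactly contained in $\mathbb{D}$, but you then need to exclude a component of $\{u>c\}$ whose closure meets $\partial\mathbb{D}$ only inside the closed arc where the boundary data is $\leq c$ --- in particular, a component clinging to the two exceptional points $1$ and $e^{i\theta_0}$. For continuous boundary data on $\overline{\mathbb{D}}$ this is immediate, but here $u$ is the Poisson integral of an $L^p$ function that is discontinuous (with a genuine jump at $1$) and typically unbounded, so $\limsup_{z\to\zeta}u(z)\leq c$ is only available at interior points of the arc (by splitting the Poisson kernel), and ruling out mass escaping at the endpoints requires a Phragm\'en--Lindel\"of argument using a growth bound such as $|u(z)|=O((1-|z|)^{-1/p})$. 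You name this difficulty honestly (``the main obstacle is rigor at the boundary'') but resolve it only by declared intention, and the same issue infects your alternative finish via the winding number on circles $|z|=r\uparrow 1$. Everything downstream of the structural lemma --- absence of critical points, uniqueness of the crosscut, strict monotonicity of $v=\Im\widetilde{\varphi}_{\mu}$ along it by Cauchy--Riemann --- is fine, and the whole scheme would work if that boundary analysis were supplied; but as written the proof is incomplete at its load-bearing step. (Minor additional point: like the paper, you should note where nondegeneracy of $\mu$ enters --- in your setup it is hidden in the assumption that values $c$ strictly between $\inf\varphi_\mu$ and $\sup\varphi_\mu$ exist; in the paper it appears in the constant-vs-injective dichotomy of Hurwitz.)
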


\begin{proof}
Recall that $G$ is a non-decreasing function on $(0,2\pi)$. We may find a sequence of functions $G_n$ on $(0,2\pi)$ which converges to $G$ in $L^1$ such that each $G_n$ has the following properties.

\begin{itemize}
    \item $G_n$ is $C^\infty$ and non-decreasing on $(0,2\pi)$.

    \item $\lim_{\theta \to 0+} G_n(\theta)$ and $\lim_{\theta \to 2 \pi -} G_n(\theta)$ both exist and are finite.

    \item $\lim_{\theta \to 0+} G^{(k)}_n(\theta)$ and $\lim_{\theta \to 2 \pi -} G_n^{(k)}(\theta)$ both exist and are finite, and furthermore  $\lim_{\theta \to 0+} G^{(k)}_n(\theta) = \lim_{\theta \to 2 \pi -} G_n^{(k)}(\theta)$,  for all $k \geq 1$. In other words, $G'_n$ extends to be a $C^\infty$ function on the circle.

\end{itemize}

If we now let $\kappa_n = \lim_{\theta \to 2 \pi -} G_n(\theta) - \lim_{\theta \to 0+} G_n(\theta)$, then $\tilde G_n(\theta) = G_n(\theta) - \frac{\kappa_n}{2 \pi} \theta$ extends to be $C^\infty$ on the entire circle (i.e. with $0$ and $2 \pi$ identified). A standard result in Fourier analysis now states that the Fourier coefficients $a_n$ of $\tilde G_n$ satisfy $a_n = O(|n|^{-2})$ (\cite[Cor. 2.4]{stein2011fourier}). Form an analytic function $\tilde f_n$ with the Fourier coefficients of $\tilde G_n(\theta)$ as in (\ref{define analytic}). The decay of the coefficients of $\tilde f_n$ means that the power series converges absolutely on the boundary of the disk, and $\tilde f_n$ therefore extends to be continuous on the closed unit disk. Let $f_n(z) = \tilde f_n(z) - i \frac{\kappa_n}{\pi} Log(1-z) + \frac{\kappa_n}{2}$, where $Log$ denotes the analytic logarithm function whose imaginary part $Arg$ takes values in $(-\pi,\pi)$. Then $f_n$ is analytic on the disc and continuous on the closure of the disc minus the point $1$. Furthermore, the modulus of $f_n$ approaches $\infty$ as $z$ approaches 1. It is therefore a continuous map from the closed unit disk to the Riemann sphere, with $1$ being mapped to $\infty$. Furthermore, it can be checked by elementary geometry, or by trigonometric identities, that

$$
\Re(- i \frac{\kappa_n}{\pi} Log(1-e^{i \theta})) = \frac{\kappa_n}{\pi} Arg(1-e^{i \theta}) = \frac{\kappa_n}{\pi}(-\frac{\pi}{2} + \frac{\theta}{2}) = \frac{-\kappa_n}{2} + \frac{\kappa_n}{2\pi}\theta.
$$

It follows that $\Re(f_n(e^{i\theta})) = \tilde G_n(\theta) + \frac{-\kappa_n}{2} + \frac{\kappa_n}{2\pi}\theta + \frac{\kappa_n}{2} = G_n(\theta)$. $\Re(f_n(e^{i\theta}))$ is therefore strictly increasing on $(0,2\pi)$; that is, as $\theta$ increases from $0$ to $2\pi$, the image $f_n(e^{i\theta})$ "winds once" about the domain in the Riemann sphere. This proves that $f_n$ is injective; see for instance \cite[Thm. 10.31]{Rudin2001}.

Using Lemma \ref{schwarzif}, it is straightforward to verify that the $L^1$ convergence of $G_n$ to $G$ on $(0,2\pi)$ implies that $f_n$ converges to $\tilde \phi_\mu$ uniformly on compact sets (note that $\Im(f_n(0)) = Im(\tilde \phi_\mu(0)) =0$ by construction). By Hurwitz's Theorem (see \cite{remmert2012theory}), $\tilde \phi_\mu$ is either constant or injective. The case when $\tilde \phi_\mu$ is constant corresponds to the case when $\mu$ is a point mass at the origin, and since we have excluded this case we see that $\tilde \phi_\mu$ is injective.
\end{proof}

{\bf Remark:} The primary difference between this proof and that in \cite{gross2019} is essentially that the $G_n$'s have a jump discontinuity at the point $0 (\equiv 2\pi)$ when viewed as a function defined on the circle. This is why the logarithm made an appearance.

{\bf Remark:} The assumption that $\mu$ is nondegenerate appears when applying Hurwitz's Theorem, and this same issue also applies to the proof given in \cite{gross2019}. Essentially, in this case the domain would degenerate to a point at the origin; it could not for instance be the domain limited by the boundary $\{Re(z) = 1, |Im(z)| \} \geq 1$, since this domain contains a half-plane and therefore $E[(\tau_U)^p] = \infty$

\section{A uniqueness criterion for $\mu$-domains} \label{uniquesec}

Now we are ready to tackle the uniqueness issue of our
$\Delta^\infty$-convex domain. Before that, we need some preliminary
tools.

\begin{defn}
\label{a.e non decrezasing} We say that a function $F$ defined $[a,b]$
is non-decreasing almost everywhere if there is a non-decreasing function $\hat F$ defined on all of $[a,b]$ such that $F=\hat F$ almost everywhere. For such a function we define its generalised inverse
function $F^{-1}$ by
\[
F^{-1}(x)=\sup\{t\in[a,b]\mid \hat F(t)\leq x\}=\inf\{t\in[a,b]\mid \hat F(t)>x\}
\]
with the convention $F^{-1}(x)=a$ if $\{t\in[a,b]\mid F(t)\leq x\}$
is empty. The swap between $\sup$ and $\inf$ is justified by the
monotonicity of $\hat F$.
\end{defn}

\begin{lem}
\label{equality}Let $F$ and $G$ be two function defined on $[a,b]$,
non-decreasing a.e. If $\widetilde{F}(x)=\widetilde{G}(x)$ for a.e.
$x$ then $F$ and $G$ agree a.e.
\end{lem}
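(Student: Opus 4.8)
The plan is to prove that the generalized inverse acts as an involution on non-decreasing functions, modulo equality almost everywhere; the statement is then precisely the injectivity of the map $F \mapsto F^{-1}$ on such equivalence classes (here $\widetilde F$ denotes the generalized inverse $F^{-1}$ of Definition \ref{a.e non decrezasing}). I would isolate two facts and chain them. \textbf{Fact 1:} if $P$ and $Q$ are genuinely non-decreasing on an interval and agree a.e., then their generalized inverses coincide everywhere, $P^{-1} = Q^{-1}$. \textbf{Fact 2:} for a non-decreasing $\hat F$, the double generalized inverse $(F^{-1})^{-1}$ agrees with $\hat F$ at every continuity point of $\hat F$, hence a.e. Granting these, the argument is short: the generalized inverses $F^{-1}, G^{-1}$ are honestly non-decreasing functions which by hypothesis agree a.e., so Fact 1 gives $(F^{-1})^{-1} = (G^{-1})^{-1}$ identically, while Fact 2 gives $(F^{-1})^{-1} = \hat F$ and $(G^{-1})^{-1} = \hat G$ a.e.; combining, $\hat F = \hat G$ a.e., that is, $F = G$ a.e.

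For Fact 1 the key observation is that, since $P$ is non-decreasing, the set $\{t : P(t) \le x\}$ is an initial segment of the interval, so $P(t) \le x$ for every $t < P^{-1}(x)$ while $P(t) > x$ for every $t > P^{-1}(x)$, and similarly for $Q$. If $P^{-1}(x) \neq Q^{-1}(x)$ for some $x$, say $P^{-1}(x) > Q^{-1}(x)$, then the open interval $(Q^{-1}(x), P^{-1}(x))$ is nonempty and therefore contains a point $t$ lying outside the countable exceptional set on which $P$ and $Q$ differ; for this $t$ one has simultaneously $P(t) \le x$ and $Q(t) > x$ together with $P(t) = Q(t)$, a contradiction. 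Hence $P^{-1} = Q^{-1}$.

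For Fact 2 I would compute $(F^{-1})^{-1}(t_0) = \sup\{x : F^{-1}(x) \le t_0\}$ directly. Using $F^{-1}(x) = \sup\{t : \hat F(t) \le x\}$, one checks that $F^{-1}(x) \le t_0$ holds precisely when $\hat F(t) > x$ for all $t > t_0$, i.e. when $x \le \hat F(t_0^{+})$, where $\hat F(t_0^{+})$ is the right limit. The supremum of such $x$ is therefore $\hat F(t_0^{+})$, which equals $\hat F(t_0)$ whenever $t_0$ is a continuity point of $\hat F$. Since a monotone function has at most countably many discontinuities, this yields $(F^{-1})^{-1} = \hat F$ a.e.

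The main obstacle is not conceptual but the careful bookkeeping of domains and conventions: one must track the ranges on which each generalized inverse is defined, handle the convention $F^{-1}(x) = a$ when $\{t : \hat F(t) \le x\}$ is empty together with the constant behaviour of $F^{-1}$ beyond the range of $\hat F$, and be attentive to the distinction between left and right limits at the countably many jump points, since it is exactly there that $(F^{-1})^{-1}$ may legitimately differ from $\hat F$ and that the equivalence $F^{-1}(x) \le t_0 \iff x \le \hat F(t_0^{+})$ has an ambiguous boundary value $x = \hat F(t_0^{+})$. All of these exceptions occur on null sets, so once they are dispatched, Facts 1 and 2 chain together to give $\hat F = \hat G$ a.e., and therefore $F = G$ a.e.
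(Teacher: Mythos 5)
Your argument is correct, but it is organized differently from the paper's. The paper proves the contrapositive in a single localized step: if $\hat F(c)<\hat G(c)$ at some point $c$ of the full-measure set where $F=\hat F$ and $G=\hat G$, one may assume (after discarding the countable set of discontinuities) that $c$ is a continuity point of both, pick $\hat F(c)<\ell_1<\ell_2<\hat G(c)$, and obtain $F(t)<\ell_1<\ell_2<G(t)$ on a $\delta$-neighbourhood of $c$ within that full-measure set; then every $y\in(\ell_1,\ell_2)$ satisfies $\widetilde G(y)\le c-\delta<c+\delta\le\widetilde F(y)$, so $\widetilde F$ and $\widetilde G$ differ on a set of positive measure. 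You instead prove that the generalized inverse is an involution modulo null sets and read off injectivity from the existence of a left inverse. Your Fact 1 runs on the same mechanism as the paper's proof (a nonempty open interval must contain a point where the two monotone functions agree, forcing a contradiction), only applied to the inverses rather than to $F$ and $G$ directly; your Fact 2 computation $(F^{-1})^{-1}(t_0)=\hat F(t_0^{+})$ is correct, and the boundary ambiguity you flag is indeed harmless because it does not change the supremum. The trade-off: your route yields the reusable inversion formula $(F^{-1})^{-1}=\hat F$ a.e.\ (which is also the fact implicitly exploited when the lemma is applied in Theorem \ref{uniqueness}), at the cost of the domain bookkeeping you describe ($F^{-1}$ is defined on $\mathbb{R}$ rather than on $[a,b]$, and $(F^{-1})^{-1}$ can be $+\infty$ at the right endpoint), whereas the paper's direct contrapositive never needs to define or control a double inverse.
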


\begin{proof}
Let $\Lambda_{F}$ and $\Lambda_{G}$ be the subsets of $[a,b]$ of full measure upon which $F=\hat F$ and $G=\hat G$, respectively. Set $\Lambda=\Lambda_{F}\cap\Lambda_{G}$ and suppose that $\hat F(c)\neq \hat G(c)$
for some $c\in\Lambda$, say $F\hat (c)<\hat G(c)$. Since the subset
of $\Lambda$ where $\hat F$ or $\hat G$ are discontinuous is countable (\citep{Rudin1976}) and therefore of measure 0, we may discard it and assume that
$c$ is a continuity point for both of them in $\Lambda$. Choose $\ell_1, \ell_2$ such that
$\hat F(c) < \ell_1 < \ell_2 < \hat G(c)$. The continuity of $\hat F$ and $\hat G$ at $c$
yields
\[
F(t)<\ell_1 < \ell_2 <G(t)
\]
for all $t\in[c-\delta,c+\delta]\cap\Lambda$ for some $\delta>0$. Now, if $y \in (\ell_1,\ell_2)$ then $G^{-1}(y) \leq c-\delta$ but $F^{-1}(y) \geq c+\delta$. Thus, $F^{-1}$ and $G^{-1}$ disagree on a set of positive measure. This proves the lemma.
\end{proof}

Now we state
the uniqueness result.
\begin{thm}
\label{uniqueness}The distribution $\mu$ generates a unique $\mu$-domain
which is $\Delta^\infty$-convex provided that $\mathbf{E}(\tau_{U}^{p})<\infty$
for some $p>1$.
\end{thm}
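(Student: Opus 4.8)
The plan is to show uniqueness by reducing the problem to Lemma~\ref{equality}. Suppose $U_1 = \widetilde{\varphi}_{\mu}(\mathbb{D})$ and $U_2$ are both $\mu$-domains which are $\Delta^\infty$-convex and satisfy the moment condition $\mathbf{E}(\tau_{U_i}^{p/2}) < \infty$. The key idea is that a $\Delta^\infty$-convex domain containing $0$ is completely determined by its lower boundary curve, which can be encoded as the graph of a function $\theta \mapsto g(\theta)$ giving, for each horizontal coordinate, the bottom of the vertical ray. First I would use the Riemann mapping theorem to produce a conformal map $f_i : \mathbb{D} \to U_i$ with $f_i(0) = 0$, normalized appropriately (say $\Im(f_i(0)) = 0$, which we can arrange). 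The strategy is to argue that the boundary correspondence forces $\Re(f_i(e^{i\theta}))$ to be a non-decreasing rearrangement of the quantile data, so that both maps share the same real boundary values up to the generalized-inverse identification of Lemma~\ref{equality}.

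The crucial analytic step is to connect the distributional hypothesis to the boundary values. By conformal invariance, $\Re(Z_{\tau_{U_i}})$ has the same law as $\Re(f_i(e^{i\Theta}))$ where $\Theta \sim \mathsf{Uni}(0,2\pi)$, and this equals $\mu$ by assumption. The moment condition $\mathbf{E}(\tau_{U_i}^{p/2}) < \infty$ places $f_i$ in $\mathcal{H}^p$ via Burkholder's theorem, so the radial boundary values $f_i^*(e^{i\theta})$ exist a.e.\ and lie in $L^p$. Now the $\Delta^\infty$-convexity should force the boundary function $h_i(\theta) := \Re(f_i^*(e^{i\theta}))$ to be non-decreasing almost everywhere on an appropriately chosen parametrization: as the boundary point traverses the lower curve of the domain from left to right, the real part increases monotonically, and the $\Delta^\infty$ rays fill out everything above. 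Making this monotonicity rigorous — showing that the a.e.-defined boundary map really does sweep out the lower boundary monotonically, rather than doubling back — is where I expect the main obstacle to lie, since the boundary values are only defined almost everywhere and the geometry of a general $\Delta^\infty$-convex domain could be delicate near the "top" of the domain at $\infty$.

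Granting the monotonicity, I would then invoke that $h_i$ is non-decreasing a.e.\ with the same push-forward distribution $\mu$ under uniform measure on $(0,2\pi)$. Two non-decreasing functions (in the sense of Definition~\ref{a.e non decrezasing}) that push Lebesgue measure forward to the same distribution must have the same generalized inverse, hence agree a.e.\ by the argument underlying Lemma~\ref{equality}; concretely the non-decreasing rearrangement of a function is exactly determined by its distribution, so $h_1 = h_2$ a.e.\ after matching parametrizations. This yields $\Re(f_1^*) = \Re(f_2^*)$ a.e.\ on $\partial\mathbb{D}$. Finally I would apply Schwarz's integral formula (Lemma~\ref{schwarzif}), together with the normalization $\Im(f_i(0)) = 0$, to conclude that $f_1 = f_2$ identically on $\mathbb{D}$, since a function in $\mathcal{H}^p$ is determined inside the disk by the boundary values of its real part up to the single imaginary constant $\Im(f(0))$. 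Therefore $U_1 = f_1(\mathbb{D}) = f_2(\mathbb{D}) = U_2$, establishing uniqueness.

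The subtle point worth isolating is the matching of parametrizations: each $f_i$ comes with its own boundary correspondence, so to compare $h_1$ and $h_2$ directly I would either fix the rotational degree of freedom in the Riemann map or phrase the whole comparison intrinsically in terms of the lower boundary curve of the domain (the graph over the real axis of the function giving the bottom of each vertical ray) rather than in terms of the circle parametrization. Working intrinsically with this boundary curve, rather than with $\theta$, is probably the cleanest route and sidesteps the ambiguity in the rotation; the distribution $\mu$ then determines this curve uniquely, and Schwarz's formula recovers the map.
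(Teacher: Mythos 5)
Your proposal is correct and follows essentially the same route as the paper's proof: conformal maps fixing $0$, boundary values via the Hardy-space/Burkholder connection, monotonicity of $\Re(f_i^*(e^{i\theta}))$ from $\Delta^\infty$-convexity, identification of the generalized inverse with $2\pi F_\mu$ so that Lemma~\ref{equality} gives $\Re(f_1^*)=\Re(f_2^*)$ a.e., and then Schwarz's integral formula. The monotonicity/parametrization issue you flag as the main obstacle is likewise passed over without detailed justification in the paper's own argument, so your treatment matches it in both structure and level of rigor.
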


\begin{proof}
Let $U,V$ be two $\mu$-domains which are $\Delta^\infty$-convex and which satisfy $\mathbf{E}(\tau_{U}^{p}),\mathbf{E}(\tau_{V}^{p})<\infty$ for some $p>1$. Let $f,g$ be two conformal
maps from $\mathbb{D}$ to $U,V$ fixing $0$. By $\Delta^\infty$-convexity, the functions $F(t)=\Re(f(e^{ti}))$
and $G(t)=\Re(g(e^{ti}))$ (defined for a.e $t\in[0,2\pi]$ in the
sense of radial limits, see \cite{Rudin2001}) are a.e. non-decreasing so they have well defined
generalized inverse functions $\widetilde{F}$ and $\widetilde{G}$.
As $U$ and $V$ are $\mu$-domain then $\Re[Z_{\tau_{U}}]$ and $\Re[Z_{\tau_{V}}]$
share the same distribution $\mu$. Therefore, by the conformal invariance
of Brownian motion, we get
\[
\begin{alignedat}{1}\mathbf{P}\{\Re[Z_{\tau_{U}}]\in(-\infty,x]\} & =\mathbf{P}\{\Re[Z_{\tau_{V}}]\in(-\infty,x]\}\\
 & =\frac{\widetilde{F}(x)}{2\pi}\\
 & =\frac{\widetilde{G}(x)}{2\pi}.
\end{alignedat}
\]
for all $x$. Thus $F\overset{a.e}{=}G$ by applying Lemma \ref{equality}.
Consequently $f-g$ is constant via Schwarz integral formula (\ref{schwarz-1}),
and since they send zero to the same point we conclude the equality
of $f$ and $g$.
\end{proof}

\begin{rem}
The condition that $\mathbf{E}(\tau_{U}^{p})<\infty$
for some $p>1$ is necessary. To see this, take for instance the domain $U = \CC \backslash  \{\Im(z) \leq -1, -1 \leq \Re(z) \leq 1\}$. The resulting distribution of $\Re[Z_{\tau_{U}}]$ is supported on $[-1,1]$, and therefore has all moments. However, $U$ cannot be the domain generated in Theorem \ref{Existence1} as $\mathbf{E}(\tau_{U}^{p})=\infty$ for $p \geq \frac{1}{4}$.
\end{rem}

Theorem \ref{uniqueness} implies that, in practice, to obtain such
$\mu$-domain it it enough to find the Fourier expansion of
\[
\varphi_{\mu}:\begin{alignedat}{1}(0,2\pi) & \longrightarrow\mathbb{R}\\
\theta & \longmapsto G_{\mu}({\textstyle \frac{\theta}{2\pi}})
\end{alignedat}
\]
and consider $\widetilde{\varphi}_{\mu}(\mathbb{D})$.
\begin{thm}
\label{minimality of the domain}The $\mu$ domain $U_{\mu}$ constructed
in $\mathbf{E}(\tau_{U}^{p})<\infty$ has the lowest rate among all other $\mu$-
domains. In other words
\[
\lambda(\widehat{U}_{\mu})\leq\lambda(U_{\mu})
\]
 for all $\mu$ domains $U_{\mu}$. Furthermore $\lambda(\widehat{U}_{\mu})=\frac{\pi^{2}}{2(\beta-\alpha)^{2}}$
where $[\alpha,\beta]$ is the smallest interval containing the support
of $\mu$.
\end{thm}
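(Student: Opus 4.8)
The plan is to prove the two assertions of Theorem \ref{minimality of the domain} separately: first the explicit value $\lambda(\widehat{U}_{\mu})=\frac{\pi^{2}}{2(\beta-\alpha)^{2}}$, and then the extremal inequality $\lambda(\widehat{U}_{\mu})\leq\lambda(U_{\mu})$ for every $\mu$-domain. The natural strategy is to realize both facts through the monotonicity of the rate $\lambda$ under domain inclusion, together with the geometric structure of the $\Delta^\infty$-convex domain. The key observation I would start from is that since $\widehat{U}_{\mu}$ is $\Delta^\infty$-convex and supports the distribution $\mu$, its real-part projection is exactly the interval $(\alpha,\beta)$; consequently $\widehat{U}_{\mu}$ is contained in the vertical strip $S=\{z:\alpha<\Re(z)<\beta\}$. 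This containment is the whole point: by the variational characterization of the principal Dirichlet eigenvalue (the rate is monotone decreasing under domain enlargement), $\lambda(\widehat{U}_{\mu})\geq\lambda(S)$, and a direct separation-of-variables computation gives $\lambda(S)=\frac{\pi^{2}}{2(\beta-\alpha)^{2}}$ for the half-Laplacian on a strip of width $\beta-\alpha$.

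For the reverse direction, giving the exact value, the idea is that the $\Delta^\infty$-convex domain is, in the relevant sense, an exhaustion of the strip from below by infinite vertical rays, so that its rate cannot exceed that of the strip. More precisely, I would use that $\widehat{U}_{\mu}$ contains arbitrarily tall vertical segments (indeed full upward rays) spanning the full width $\beta-\alpha$ near the top, so one can inscribe in $\widehat{U}_{\mu}$ rectangles $(\alpha+\varepsilon,\beta-\varepsilon)\times(a,b)$ with $b-a$ arbitrarily large, whose rates tend to $\lambda(S)$ as $\varepsilon\to0$ and $b-a\to\infty$. Monotonicity then forces $\lambda(\widehat{U}_{\mu})\leq\lambda(S)$, and combined with the previous paragraph we obtain equality $\lambda(\widehat{U}_{\mu})=\lambda(S)=\frac{\pi^{2}}{2(\beta-\alpha)^{2}}$.

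For the extremal inequality, the decisive remark is that \emph{every} $\mu$-domain $U_{\mu}$ must also project onto an interval containing $[\alpha,\beta]$ in its real part, and in fact must contain points with real part arbitrarily close to $\alpha$ and to $\beta$, since $\Re(Z_{\tau})$ has distribution $\mu$ whose support has convex hull $[\alpha,\beta]$. Thus any $\mu$-domain is \emph{not} contained in any strip strictly narrower than width $\beta-\alpha$; equivalently its width in the real direction is at least $\beta-\alpha$. I would then argue that $\lambda(U_{\mu})\geq\frac{\pi^{2}}{2(\beta-\alpha)^{2}}=\lambda(\widehat{U}_{\mu})$, using a one-dimensional comparison: projecting the $2$-dimensional eigenvalue problem onto the real axis (or comparing with the strip containing no domain narrower than the support width) shows the rate is bounded below by the strip rate. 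A clean way is to note that the principal Dirichlet eigenvalue of any domain lying in a strip of width $w$ is at least $\frac{\pi^2}{w^2}$, and since a $\mu$-domain cannot fit in a strip of width less than $\beta-\alpha$ while its real part cannot escape $[\alpha,\beta]$ in distribution, one extracts the bound $\lambda(U_{\mu})\geq\frac{\pi^2}{2(\beta-\alpha)^2}$.

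The main obstacle I anticipate is the rigorous lower bound $\lambda(U_{\mu})\geq\lambda(S)$ for a general (possibly very irregular or unbounded) $\mu$-domain: unlike the constructed domain, an arbitrary $\mu$-domain need not sit inside the strip $S$, so the naive monotonicity argument does not directly apply. The correct reduction is to show that the constraint ``$\Re(Z_{\tau})$ has support with convex hull $[\alpha,\beta]$'' forces $U_{\mu}\subseteq S$ up to the behavior relevant for the rate, or to replace the strip comparison by the sharp fact that a domain whose real-part image lies in $(\alpha,\beta)$ has rate at least the strip rate via the one-dimensional Poincaré inequality in the $\Re$-direction. Making this reduction precise, and ruling out the possibility that mass of $\mu$ concentrated away from $\alpha,\beta$ allows a lower rate, is where the real work lies; I would handle it by the variational (Rayleigh quotient) formulation, bounding $\int|\nabla u|^2$ below by the horizontal contribution $\int|\partial_x u|^2$ and applying the one-dimensional eigenvalue bound on each horizontal slice, whose width is at most $\beta-\alpha$ precisely because $U_{\mu}$ lies in the closed strip after accounting for the support constraint.
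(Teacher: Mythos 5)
Your computation of $\lambda(\widehat U_\mu)$ is sound and is consistent with the route the paper points to (the paper gives no argument of its own, deferring to Theorems 2 and 3 of Mariano--Panzo and citing ``the rate of strips and rectangles as well as the monotonicity of the rate''): $\widehat U_\mu$ sits inside the strip $S=\{\alpha<\Re z<\beta\}$, giving $\lambda(\widehat U_\mu)\ge\lambda(S)=\tfrac{\pi^2}{2(\beta-\alpha)^2}$, and $\Delta^\infty$-convexity lets you inscribe arbitrarily tall rectangles of width arbitrarily close to $\beta-\alpha$, giving the reverse inequality.

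The gap is in the third step, the inequality $\lambda(U_\mu)\ge\lambda(S)$ for an \emph{arbitrary} $\mu$-domain: you correctly identify this as the crux and then argue in a circle around it. What is needed is the containment $U_\mu\subseteq\{\alpha\le\Re z\le\beta\}$. Your observation that $U_\mu$ \emph{cannot fit inside a narrower} strip points in the wrong direction --- failing to be contained in a small set can only produce an \emph{upper} bound on the eigenvalue via monotonicity --- and your fallback, the slicewise Poincar\'e inequality applied ``because $U_\mu$ lies in the closed strip after accounting for the support constraint,'' simply assumes the containment you are trying to prove. Moreover, that containment is genuinely false without the moment hypothesis: the paper's own remark in Section 4 exhibits $U=\CC\setminus\{\Im z\le-1,\ -1\le\Re z\le1\}$, whose exit law has real part supported in $[-1,1]$ yet which contains a half-plane, so $\lambda(U)=0$. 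The missing ingredient is to use $\mathbf{E}[\tau_{U_\mu}^{p/2}]<\infty$ (the integrability built into the embedding problem): first, $z\mapsto\mathbf{P}_z\bigl(\Re(Z_{\tau_{U_\mu}})\notin[\alpha,\beta]\bigr)$ is a nonnegative harmonic function on $U_\mu$ vanishing at the interior point $0$, hence identically zero, so the exit law from \emph{every} $z\in U_\mu$ has real part in $[\alpha,\beta]$; second, the moment condition propagates to every starting point and justifies optional stopping of the martingale $\Re(Z_t)$, whence $\Re(z)=\mathbf{E}_z[\Re(Z_{\tau_{U_\mu}})]\in[\alpha,\beta]$ for every $z\in U_\mu$. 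This yields $U_\mu\subseteq\overline{S}$, and only then does domain monotonicity (or your horizontal-slice Rayleigh-quotient argument) give $\lambda(U_\mu)\ge\lambda(S)=\lambda(\widehat U_\mu)$.
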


The proof of Theorem \ref{minimality of the domain} is exactly the same as for Theorems
$2$ and $3$ in \citep{mariano2020conformal}. It is based on the rate of
strips and rectangles as well as the monotonicity of the rate in terms
of domains.

\begin{example}
The uniform distribution on $(-1,1)$.
\end{example}

We can check that $\varphi_{\mu}(\theta)=\frac{\theta}{\pi}-1$ and
has the Fourier series
\[
\varphi_{\mu}(\theta)=-\frac{2}{\pi}\sum_{n=1}^{+\infty}{\textstyle \frac{\sin(n\theta)}{n}}.
\]
The power series is
\[
\widetilde{\varphi}_{\mu}(z)=\frac{2i}{\pi}\sum_{n=1}^{+\infty}\frac{z^{n}}{n}=-\frac{2i}{\pi}\ln(1-z).
\]
This function maps the unit disk to the catenary; see Figure \ref{cat12}. This example is the subject of \cite{mariano2020conformal}. Note that $-\frac{2i}{\pi}\ln(1+z)$ produces the same distribution,
however $-\frac{2i}{\pi}\ln(1-z)$ is the one which has a non decreasing
real part on $(0,2\pi)$.

\begin{figure}
\begin{centering}
\includegraphics[width=6cm,height=6cm,keepaspectratio]{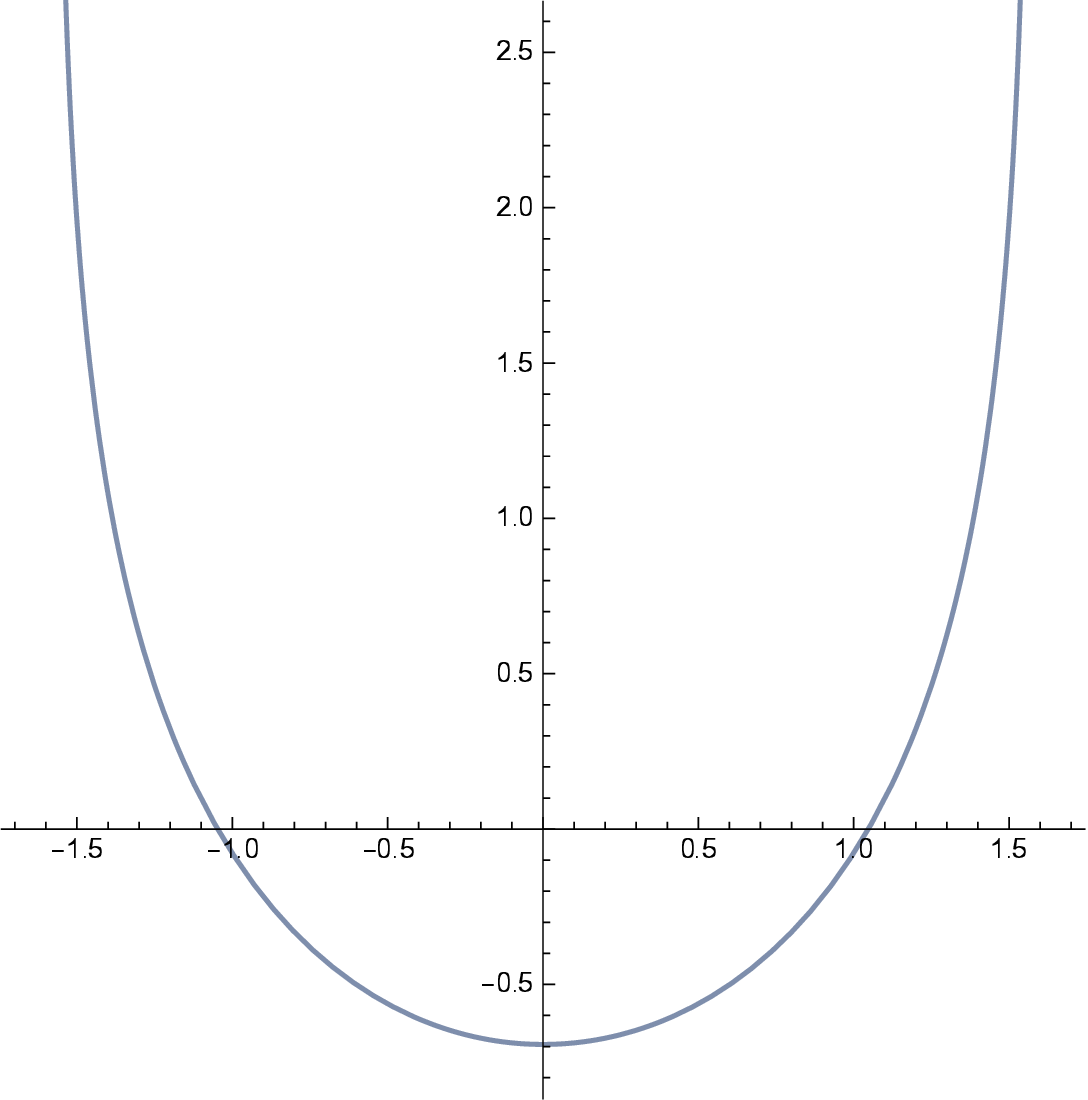}
\par\end{centering}
\caption{The catenary, which is domain for the uniform distribution.} \label{cat12}
\end{figure}

\begin{example}
\label{exa:The-scaled-and}The scaled and centered arcsine law on
$(-1,1)$.
\end{example}

We get $\varphi_{\mu}(\theta)=-\cos(\nicefrac{\theta}{2})$ and so
the power series is
\[
\widetilde{\varphi}_{\mu}(z)=-\frac{8i}{\pi}\sum_{n=1}^{+\infty}{\textstyle \frac{n}{1-4n^{2}}z^{n}}={\textstyle \frac{i}{\pi}\left\{ \ln\left(\frac{1+\sqrt{z}}{1-\sqrt{z}}\right)(\sqrt{z}+\frac{1}{\sqrt{z}})-2\right\} }.
\]

It is perhaps not so easy to deduce the image of the unit disk under this map; however, in the next section we present a method for finding the equation of the boundary curve in terms of the distribution. As we will show there, when applied to this law we obtain the domain in Figure \ref{cat11}, limited by the curve $y=-\frac{2}{\pi}(x\ln(\cot({\textstyle \frac{\arccos(-x)}{4}}))+1)$.

\begin{figure}
\begin{centering}
\includegraphics[width=6cm,height=6cm,keepaspectratio]{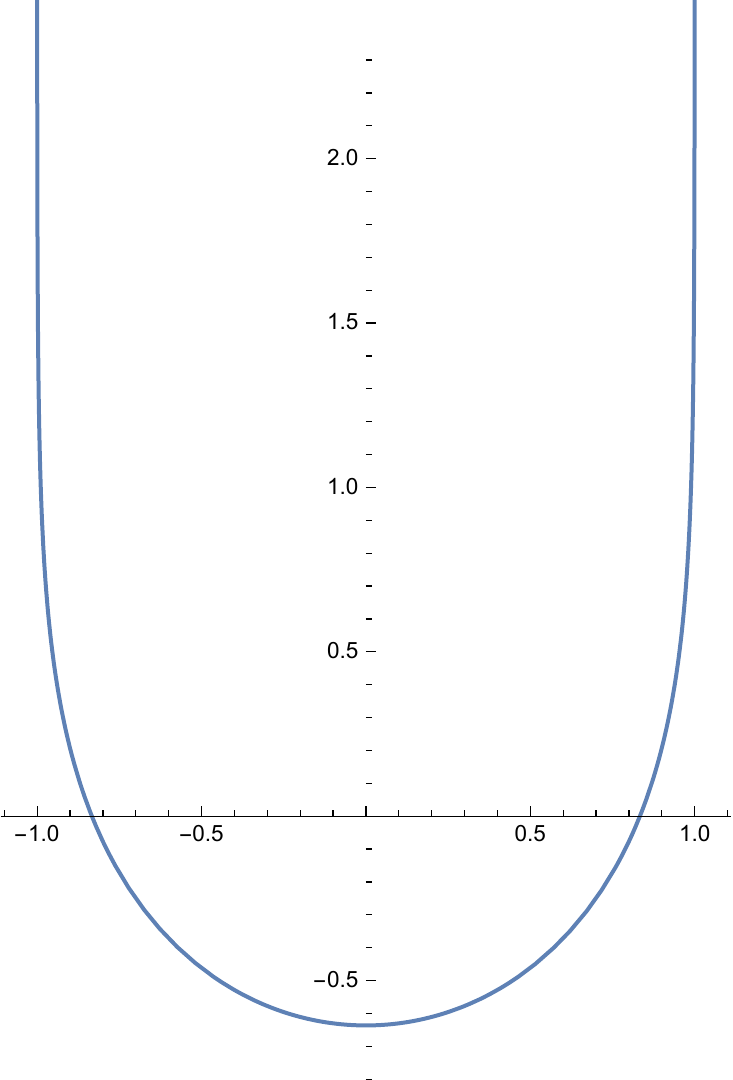}
\par\end{centering}
\caption{The domain for the arcsin distribution. The extremal lower point of the domain is $-\frac{8i}{\pi}\sum_{n=1}^{+\infty}{\textstyle \frac{(-1)^{n}n}{1-4n^{2}}}=-\nicefrac{2i}{\pi}\approx-0.636i$} \label{cat11}

\end{figure}

\begin{example}
Consider the density $\frac{\mathrm{sech}(\frac{\sqrt{2}\pi}{2}x)}{\sqrt{2}}$. As is shown in \cite{boudabra2019remarks}, Gross' method applied to this distribution yields a horizontal infinite strip. 
The method given in Theorem \ref{Existence1} when applied to this distribution yields the function

\[
f(z)=\frac{2i}{\pi^{2}}\Big(\ln\left(\frac{1+\sqrt{z}}{1-\sqrt{z}}\right)\Big)^{2}.
\]

This conformal map sends the unit disc onto the parabola limited by the equation \citep{kanas1999conic}

\begin{equation}
2y=x^{2}-1.\label{u,v}
\end{equation}
\end{example}

\begin{figure}[H]
\begin{centering}
\includegraphics[width=5cm,height=5cm]{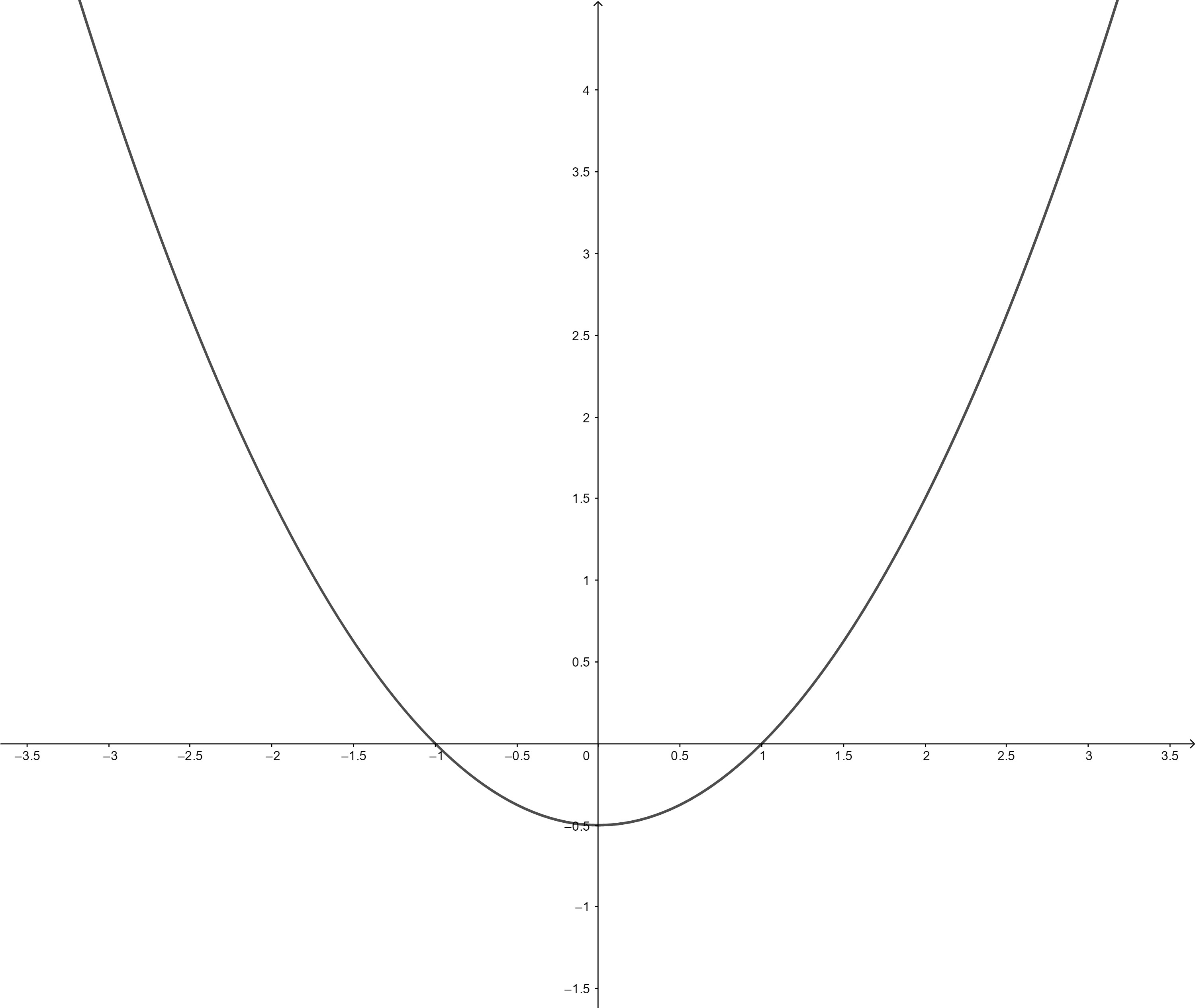}
\par\end{centering}
\caption{Parabola of equation $x^{2}=2y+1$.}
\end{figure}

\begin{example}
If the distribution $\mu$ is of the form
\[
\mu=\sum_{n=1}^{m}x_{n}\delta_{x_{n}}
\]
 where $x_{n}>0$ for all $n$ and $\sum_{n-1}^m x_n = 1$, then the domain generated
by $\mu$ is the strip $\{x_{1}<\Re(z)<x_{m}\}$ with the vertical
slits $\left(\{x_{n}\}\times(-\infty,y_{n}]\right)_{1<n<m}$ removed, where
the $y_{n}$'s are some real numbers. In \cite{gross2019} some methods for calculating the values of $y_n$ in the context of Gross' method are presented, and they also work when applied to our method.
\end{example}



\section{Equation of the boundary} \label{boundarysec}

In this section we prove that, in some situations, the boundary of
the $\mu$-domain is the graph of a function. This often helps to
reduce the computations required to determine the domain. As before,
$F_{\mu}$ stands for the c.d.f of $\mu$ and $G_{\mu}$ stands for
its pseudo-inverse.

\begin{thm} \label{boundaryequation}
If the distribution $\mu$ is atomless then the boundary of our domain
have the equation
\begin{equation}
y=\gamma(x)=H\{\varphi_{\mu}\}(\varphi_{\mu}^{-1}(x))=H\{F_{\mu}^{-1}\}(F_{\mu}(x)).\label{curve}
\end{equation}
\end{thm}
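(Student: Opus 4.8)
The plan is to identify the boundary of the domain $U = \widetilde{\varphi}_\mu(\mathbb{D})$ with the image of the unit circle under the boundary extension of the conformal map $\widetilde{\varphi}_\mu$, and then to parametrize that image curve explicitly in terms of $x$ and $y$. Recall that $\widetilde{\varphi}_\mu \in \mathcal{H}^p$, so by the theory recalled in the Preliminaries it has radial boundary values $\widetilde{\varphi}_\mu^*(e^{i\theta})$ defined for a.e.\ $\theta$, with $\Re[\widetilde{\varphi}_\mu^*(e^{i\theta})] = \varphi_\mu(\theta)$ and $\Im[\widetilde{\varphi}_\mu^*(e^{i\theta})] = H\{\varphi_\mu\}(\theta)$ a.e. Thus the boundary curve is traced out, as $\theta$ ranges over $(0,2\pi)$, by the points $(x,y) = (\varphi_\mu(\theta),\, H\{\varphi_\mu\}(\theta))$. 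The first step is to justify why this radial-limit curve genuinely is the topological boundary of the open set $U$, rather than some proper subset of it or some larger set; the $\Delta^\infty$-convexity established in Theorem \ref{Existence1}, together with the fact that $U$ contains $0$ and is unbounded below along each vertical ray, is what makes the boundary a graph over the $x$-axis (each vertical line meets $\partial U$ in at most one point, its lowest point).

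Next I would eliminate the parameter $\theta$. Because $\mu$ is atomless, its c.d.f.\ $F_\mu$ is continuous and strictly increasing on the interior of the support, so the quantile function $G_\mu = F_\mu^{-1}$ is continuous and strictly increasing, and hence so is $\varphi_\mu(\theta) = G_\mu(\theta/2\pi)$. This strict monotonicity is precisely what guarantees that $\varphi_\mu$ is invertible and that $\theta = \varphi_\mu^{-1}(x)$ is well defined for each $x$ in the interior of the support. Substituting $\theta = \varphi_\mu^{-1}(x)$ into the $y$-coordinate $y = H\{\varphi_\mu\}(\theta)$ gives immediately
\[
y = \gamma(x) = H\{\varphi_\mu\}(\varphi_\mu^{-1}(x)),
\]
which is the first equality in (\ref{curve}). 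The second equality is then a matter of translating notation: since $\varphi_\mu(\theta) = F_\mu^{-1}(\theta/2\pi)$, one checks that $\varphi_\mu^{-1}(x) = 2\pi F_\mu(x)$, and that $H\{\varphi_\mu\}$ composed with this agrees with $H\{F_\mu^{-1}\}(F_\mu(x))$ after accounting for the linear rescaling of the argument. Here the dilation-commutation property (\ref{dilation}) of the Hilbert transform, $H \circ \Phi_\lambda = \Phi_\lambda \circ H$, is the tool that lets the $2\pi$ factor pass cleanly through the transform.

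The main obstacle I anticipate is the first step rather than the algebra: making rigorous the claim that the radial-limit parametrization $\theta \mapsto (\varphi_\mu(\theta), H\{\varphi_\mu\}(\theta))$ actually describes $\partial U$ pointwise. The radial limits exist only almost everywhere, and a priori the boundary of a Hardy-space image domain can be more complicated than a single curve; one must use the atomlessness of $\mu$ (hence continuity and strict monotonicity of $\varphi_\mu$) to rule out the horizontal slits and jumps that appear in the discrete examples, and to promote the a.e.\ boundary correspondence to a genuine continuous curve that bounds $U$ from below. I expect that continuity of $\varphi_\mu$ forces $\widetilde{\varphi}_\mu$ to extend continuously to the relevant boundary arc (the logarithmic singularity at $z=1$ seen in the proof of the injectivity lemma is the only place where care is needed, and it corresponds to $\theta \to 0^+, 2\pi^-$, i.e.\ to the extreme points of the support), so that the prime-end boundary correspondence reduces to the explicit formula above. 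Once the identification of the boundary as this graph is secured, the two displayed equalities in (\ref{curve}) follow by the substitution and the dilation identity as described.
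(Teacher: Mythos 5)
Your proposal follows essentially the same route as the paper's proof: use the radial boundary values of $\widetilde{\varphi}_\mu$ to parametrize $\partial U$ by $\theta\mapsto(\varphi_\mu(\theta),H\{\varphi_\mu\}(\theta))$, invoke atomlessness and $\Delta^\infty$-convexity to see the boundary is a graph, invert $\varphi_\mu$ to get the first equality, and use the dilation identity (\ref{dilation}) together with $\varphi_\mu^{-1}=2\pi F_\mu$ for the second. The one point you flag as the main obstacle --- promoting the a.e.\ radial-limit correspondence to a genuine identification of the topological boundary --- is in fact simply asserted in the paper's proof, so your treatment is, if anything, slightly more candid about where the work lies.
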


\begin{proof}
Let $U$ be such a domain. Due to the atomlessness of $\mu$ and the $\Delta^\infty$-convexity of $U$ axis then any vertical
line crosses $\partial U$ in at most one point. Hence this boundary
is the curve of a function $x\longmapsto\gamma(x)$. The uniqueness
of $U$ guaranteed by Theorem \ref{uniqueness} yields that $\Re[f(e^{\theta i})]$
is simply $\varphi_{\mu}(\theta)=G_{\mu}(\nicefrac{\theta}{2\pi})$.
As $G_{\mu}$ is continuous then $\gamma$ will inherits this property.
The boundary of $U$ is parameterized by
\[
\theta\mapsto(x,y)=(\varphi_{\mu}(\theta),H\{\varphi_{\mu}\}(\theta)).
\]
Therefore to find $\gamma$ it is enough to express $y$ in terms
of $x$. Since $\varphi_{\mu}$ is increasing then $\theta=\varphi_{\mu}^{-1}(x)$
and hence
\[
\begin{alignedat}{1}\gamma(x) & =H\{\varphi_{\mu}\}(\varphi_{\mu}^{-1}(x))\\
 & \overset{{\scriptscriptstyle \varphi_{\mu}^{-1}=2\pi F_{\mu}}}{=}H\{\Phi_{\nicefrac{1}{2\pi}}\{F_{\mu}^{-1}\}\}(2\pi F_{\mu}(x))\\
 & \overset{\eqref{dilation}}{=}H\{F_{\mu}^{-1}\}(F_{\mu}(x)).
\end{alignedat}
\]
\end{proof}
The above theorem is also valid for domains obtained by Gross method
where $y=\gamma(x)$ is the equation of the lower boundary. Let's
give two concrete examples where (\ref{curve}) is applied, one for
our method and one for Gross method. Furthermore, the boundary equation
(\ref{curve}) is a local formula and we can use it ``carefully''
for atomic distributions. More precisely, if $\mu$ has atoms $x_{1},x_{2},...$
and $F_{\mu}$ is increasing on every interval $(x_{i},x_{i+1})$
then (\ref{curve}) is valid for all $x\in\textrm{supp}(\mu)\setminus\{x_{1},...\}$.

\begin{prop}
Let $\gamma(x)$ be a continuous and a.e differentiable function defined
over some (finite or infinite) interval $(a,b)$ such that $\gamma(0)<0$.
If
\[
\gamma(x)=H\{F^{-1}\}(F(x))
\]
for some continuous distribution function $F$ then the density of
$Z_{\tau_{U}}$ at $z=x+yi$ is given a.e by
\[
\rho(x+yi)=\frac{F'(x)}{\sqrt{1+\gamma'(x)^{2}}}
\]
where $U$ is the domain above the graph of $\gamma$.
\end{prop}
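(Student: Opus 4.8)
The plan is to read the formula correctly: since the exit point $Z_{\tau_U}$ lives on the one-dimensional curve $\partial U$, the stated "density" $\rho$ can only mean the Radon--Nikodym derivative of the harmonic measure (the law of $Z_{\tau_U}$) with respect to arc-length measure on $\partial U$. Once this is understood, the proof is a change of variables relating arc length to the horizontal coordinate.

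First I would identify the domain. The region $U$ lying above the graph of a continuous function $\gamma$ is automatically $\Delta^\infty$-convex, since the upward vertical ray from any point above the graph stays above the graph. Because $\gamma(x)=H\{F^{-1}\}(F(x))$, Theorem \ref{boundaryequation} together with the uniqueness statement of Theorem \ref{uniqueness} identifies $U$ as the $\mu$-domain associated to the atomless distribution $\mu$ whose c.d.f. is $F$; the hypothesis $\gamma(0)<0$ guarantees $0\in U$, so the Brownian motion started at $0$ genuinely runs inside $U$. Consequently $\Re(Z_{\tau_U})\sim\mu$, and the real part $X:=\Re(Z_{\tau_U})$ has density $F'$.

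Next I would describe the exit point as a point of the curve. As in the proof of Theorem \ref{boundaryequation}, atomlessness of $\mu$ together with $\Delta^\infty$-convexity forces every vertical line to meet $\partial U$ in exactly one point, so $\partial U$ is precisely the graph $\{(x,\gamma(x))\}$ and the exit point is $Z_{\tau_U}=X+\gamma(X)i$. Thus the law of $Z_{\tau_U}$ on $\partial U$ is the pushforward of the measure $F'(x)\,dx$ under the parametrization $x\mapsto(x,\gamma(x))$. Finally I would convert to arc length: along the graph of an a.e.\ differentiable $\gamma$ one has $ds=\sqrt{1+\gamma'(x)^2}\,dx$ a.e., so for any Borel set $A\subseteq\partial U$ that is the image of a Borel set $B$ of $x$-values,
\[
\mathbf{P}(Z_{\tau_U}\in A)=\int_B F'(x)\,dx=\int_B \frac{F'(x)}{\sqrt{1+\gamma'(x)^2}}\,\sqrt{1+\gamma'(x)^2}\,dx=\int_A \frac{F'(x)}{\sqrt{1+\gamma'(x)^2}}\,ds.
\]
Since $\mathbf{P}(Z_{\tau_U}\in A)=\int_A\rho\,ds$ by definition of $\rho$, uniqueness of the Radon--Nikodym derivative yields $\rho(x+\gamma(x)i)=F'(x)/\sqrt{1+\gamma'(x)^2}$ for a.e.\ $x$, which is the claim.

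I expect the main obstacle to be the change-of-variables step rather than anything probabilistic: one must justify the arc-length identity $ds=\sqrt{1+\gamma'(x)^2}\,dx$ knowing only that $\gamma$ is continuous and differentiable almost everywhere (not $C^1$), and confirm that the exceptional null set where $\gamma'$ fails to exist carries no harmonic measure, so that the equality of densities is valid a.e. The other point requiring care is the identification of $U$ as the correct $\mu$-domain, but this follows cleanly from the uniqueness theorem once one observes that the epigraph of $\gamma$ is $\Delta^\infty$-convex with boundary $\gamma$.
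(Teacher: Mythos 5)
Your argument is correct, but it is worth noting that the paper does not actually carry out this computation: its entire ``proof'' is a one-line citation to a formula established in \cite{boudabra2019remarks}. What you have written is, in effect, the derivation that the cited formula encapsulates, and your reading of $\rho$ as the Radon--Nikodym derivative of harmonic measure with respect to arc length is the right one (it is confirmed by the remark immediately following the proposition, where the Gross domain with two boundary branches picks up the factor $\tfrac12$ that an arc-length density must). Your route --- identify $U$ as the $\mu$-domain via Theorems \ref{boundaryequation} and \ref{uniqueness}, observe that the boundary is a graph so $Z_{\tau_U}=X+\gamma(X)i$ with $X\sim\mu$, and then change variables from $dx$ to $ds=\sqrt{1+\gamma'(x)^2}\,dx$ --- is clean and self-contained, which is a genuine improvement in completeness over the paper's citation. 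The two caveats you flag yourself are the right ones to worry about: (i) for a continuous, merely a.e.\ differentiable $\gamma$ the identity $ds=\sqrt{1+\gamma'(x)^2}\,dx$ requires $\gamma$ to be absolutely continuous on compacta (a Cantor-type singular part would make arc length strictly exceed $\int\sqrt{1+\gamma'^2}\,dx$), and the proposition's hypotheses are too weak to rule this out, though in the intended applications $\gamma$ is real-analytic off a null set and the issue is vacuous; and (ii) the identification of $U$ as the $\mu$-domain implicitly assumes $\mu$ satisfies the moment and mean-zero hypotheses under which Theorems \ref{Existence1} and \ref{uniqueness} were proved, which the proposition does not state. Neither point is a flaw of your argument relative to the paper --- the paper's statement carries the same imprecision --- so your proof should be regarded as a valid and more explicit substitute for the citation.
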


\begin{proof}
The proof comes from the formula provided in \citep{boudabra2019remarks}.
\end{proof}
\begin{rem}
Suppose $\mu$ has no atoms and $U$ is the $\mu$-domain generated by Gross' method. If $\gamma(x)$ denotes the function determining the lower boundary of $U$, then the same argument shows
\[
\rho(x\pm\begin{vmatrix}y\end{vmatrix}i)=\frac{F'(x)}{2\sqrt{1+\gamma'(x)^{2}}}.
\]
\end{rem}

\begin{example}
In \citep[Thm.3]{mariano2020conformal}, the authors gave the following
domain
\[
\mathbb{U}:=\{(x,y)\mid-1<x<1,\,y>-{\textstyle \frac{2}{\pi}}\ln(2\cos(\nicefrac{\pi x}{2}))\}
\]
as an example of a $\mu$-domain where $\mu=\mathsf{Uni}(-1,1)$.
The domain $\mathbb{U}$ is $\Delta^\infty$-convex so it is unique by our Theorem \ref{uniqueness}.
We show now that the function $\phi:x\mapsto-{\textstyle \frac{2}{\pi}}\ln(2\cos(\nicefrac{\pi x}{2}))$ can be deduced from Theorem \ref{boundaryequation} as expected. The uniform distribution c.d.f and its pseudo-inverse
function are given by
\[
\begin{alignedat}{1}F_{\mu}(x) & ={\textstyle \frac{x+1}{2}}\\
F_{\mu}^{-1}(u) & =2u-1.
\end{alignedat}
\]
The (periodic) Hilbert transform of $F_{\mu}^{-1}$ is
\[
H\{F_{\mu}^{-1}\}(x)=-{\textstyle \frac{2}{\pi}}\ln(2\sin(\pi x)),
\]
and therefore we get
\[
\begin{alignedat}{1}\gamma(x) & =-{\textstyle \frac{2}{\pi}}\ln(2\sin(\pi({\textstyle \frac{x+1}{2}})))\\
 & =-{\textstyle \frac{2}{\pi}}\ln(2\cos(\nicefrac{\pi x}{2}))\\
 & =\phi(x)
\end{alignedat}
\]
\end{example}

\begin{example}
We mention that the Gross domain generated by the centered and
scaled arcsine distribution mentioned in Example \ref{exa:The-scaled-and}
is simply the unit disc. That is, after performing the necessary
computations and applying Theorem \ref{boundaryequation}, we find the equation of the lower boundary
\[
\begin{alignedat}{1}\gamma(x) & =-\sin(\arcsin(x)+\nicefrac{\pi}{2})\\
 & =-\sqrt{1-x^{2}}.
\end{alignedat}
\]
Consequently, the generated Gross domain is limited by the union of
the graphs of $x\mapsto\pm\sqrt{1-x^{2}}$. This is the unit disc.
The same technique shows that, for the same distribution, the boundary
equation of our domain is
\[
\gamma(x)=-\frac{2}{\pi}(x\ln(\cot({\textstyle \frac{\arccos(-x)}{4}}))+1).
\]
\end{example}

\section{A suprising pseudo-example} \label{cauchy}

The density of the standard Cauchy distribution $\mu$ is given by
\[
\varrho_{\mu}(x)=\frac{1}{\pi(1+x^{2})}
\]
and its quantile function is $G_{\mu}(u)=-\cot(\pi u)$ for all $u\in(0,1)$.
This distribution does not have a mean as it is not
integrable. Therefore, we can not apply the same techniques as before
to generate a corresponding $\mu$-domain. However, let us simply ignore this issue and apply the method formally. It can be shown that $\Re[Z_{\tau_{\mathscr{U}}}]$
has the density $\varrho_{\mu}$ where $\mathscr{U}$ is the upper
half plane limited by $\{z\mid\Im(z)=-1\}$; a recent proof of this using the optional stopping theorem appears in \citep{chinjungmark}, but it can also be deduced by a direct calculation, using the Poisson kernel, or by properties of stable distributions; see for instance \cite[Sec. 1.9]{dur} or \cite[Ch. VI.2]{feller2008introduction}.
We can check that
\begin{equation} \label{cotident}
{\textstyle 2i\frac{e^{\theta i}}{1-e^{\theta i}}}=-\cot({\textstyle \frac{\theta}{2}})-i=G_{\mu}({\textstyle \frac{\theta}{2\pi}})-i.
\end{equation}
Note that the Fourier coefficients technically do not exist here, because the function is not in $L^1$ (however, the sine integrals against $-\cot({\textstyle \frac{\theta}{2}})$ do converge, and if we set the cosine terms all to $0$ by the oddness of $-\cot({\textstyle \frac{\theta}{2}})$ then we obtain (\ref{cotident})). Nevertheless, we have
\[
\vartheta(z)=2i\sum_{n=1}^{\infty}z^{n}=2i\frac{z}{1-z}.
\]
This is the M\"obius transformation taking the disk to the half-plane $\{z\mid\Im(z)>-1\}$, so that the correct conclusion does hold in this case. We do not know whether this is an isolated coincidence or a sign that the method is extendable. It is interesting to note that the same type of formal calculations do not seem to apply to Gross' method.

\section{Acknowledgements}

The authors would like to thank Zihua Guo, Renan Gross, Phanuel Mariano, and Hugo Panzo for helpful communications.

\bibliographystyle{plain}
\bibliography{Maheref}

\end{document}